\newtheorem{Theorem}{Theorem}[section]
\newtheorem{Lemma}[Theorem]{Lemma}
\newtheorem{Proposition}[Theorem]{Proposition}
\newtheorem{Corollary}[Theorem]{Corollary}
\theoremstyle{definition}
\newtheorem{Definition}{Definition}[section]
\newtheorem{Assumption}[Definition]{Assumption}
\theoremstyle{remark}
\newtheorem{Remark}[Definition]{Remark}
\numberwithin{equation}{section}
\newcommand{\abs}[1]{\left \lvert#1 \right \rvert}
\newcommand{\norm}[1]{\left\lVert#1\right\rVert}
\def\H{\mathcal H}
\def\R{\mathbb R}
\def\N{\mathbb N}
\newcommand{\dist}{\mathop{\mathrm{dist}}}
\def\Id{{\rm Id}}
\def\spt{{\rm spt}}
\DeclareMathOperator{\trace}{tr}
\DeclareMathOperator{\range}{range}
\newcommand\res{\mathop{\hbox{\vrule height 7pt width .3pt depth 0pt \vrule height .3pt width 5pt depth 0pt}}\nolimits}
\newcommand\FF{\mathbf{F}}
\def\Id{\mathrm{Id}}
\DeclareMathOperator{\Div}{div}
\title[The Area Blow Up set for elliptic surface energy functionals]{The Area Blow Up set for bounded mean curvature submanifolds with respect to elliptic surface energy functionals}
\author[G. De Philippis]{Guido De Philippis}
\address{G.D.P.: SISSA, Via Bonomea 265, 34136 Trieste, Italy}
\email{guido.dephilippis@sissa.it}
\author[A. De Rosa]{Antonio De Rosa}
\address{A.D.R.: Courant Institute of Mathematical Sciences, 251 Mercer Street, New York, NY 10012, USA}
\email{derosa@cims.nyu.edu}
\author[J. Hirsch]{Jonas Hirsch}
\address{J.H.:  Mathematisches Institut, Universit\"at Leipzig, Augustus Platz 10, D04109 Leipzig, Germany}
\email{hirsch@math.uni-leipzig.de}
\subjclass[2010]{49Q05, 53A10, 35D40}
\begin{document}

 \dedicatory{To Luis Caffarelli, for his 70th birthday.}

\begin{abstract}
In this paper we investigate the ``area blow-up'' set of a sequence of smooth co-dimension one manifolds whose first variation with respect to an anisotropic integral is bounded. Following the ideas introduced by White in \cite{White2016}, we show that this set has bounded (anisotropic) mean curvature in the viscosity sense. In particular, this allows to show that the set is empty in a variety of situations. As a consequence, we show boundary curvature estimates for two dimensional stable anisotropic minimal surfaces, extending the results of \cite{White1987}.
\end{abstract}

\maketitle

\section{Introduction}
Consider a sequence $(M_i)_i$ of $m$-dimensional varieties in a subset $\Omega\subset \R^{m+1}$ with mean curvature bounded by some $h<\infty$ and such that the boundaries have uniformly bounded measure in compact sets:
\[
    \limsup_{i\to\infty} \H^{m-1}(\partial M_i \cap K)  < \infty, \qquad \forall K \, \text{compact}.
\]
Let $Z$ be the set of points at which the areas of the $M_i$ blow up:
\[
  Z := \{ x\in \Omega: \text{ $\limsup_i \H^m(M_i\cap B_r(x)) = \infty$ for every $r>0$} \},
\]
i.e. $Z$ is the smallest closed subset of $\Omega$ such that the areas of the $M_i$
are uniformly bounded as $i\to\infty$ on compact subsets of $\Omega\setminus Z$.

In the recent paper \cite{White2016}, White finds natural conditions  implying  that $Z$ is empty. These results are useful since if $Z$ is empty, then
the areas of the $M_i$ are uniformly bounded on all compact subsets of $\Omega$. It follows that, up to subsequences, $M_i$ will converge in the sense of varifold to a limit varifold
of locally bounded first variation.

The main point of \cite{White2016} is to show that the set  $Z$ belongs to the class of $(m,h)$-sets. The notion of  $(m,h)$-set is a  generalization of the concept of an $m$-dimensional, properly embedded submanifold without boundary and with mean curvature bounded
by $h$ \footnote{In particular, in \cite{White2016}, it is shown that if $M$ is a smooth, properly embedded, $m$-dimensional submanifold without boundary,
then $M$ is an $(m,h)$-set if and only if its mean curvature is bounded by $h$.}. In particular these sets satisfy a maximum principle which often allows to show that they are empty.

The aim of this paper is to extend the aforementioned results proven in \cite{White2016} to co-dimension one manifolds (or, more in general, to co-dimension one varifolds) which are stationary with respect to a parametric integrand \(F\). 

 Referring to  Section \ref{sec:preliminaries} below for more details and definitions we simply recall here that  a parametric integrand is a even map \(F: \Omega \times \R^{m+1}\to \R^+\)  which is one homogeneous, even and  {\em convex} in the second variable. For a smooth \(m\)-dimensional manifold  \(M\subset \R^{m+1}\)  with  normal \(\nu_{M}\) we define for every open set \(\Omega\subset \R^{m+1}\)
 \[
\FF(M, \Omega)= \int_{M\cap \Omega} F(x,\nu_{M}) d\H^{m}.
\]
A smooth manifold  is then said to be $F$-stationary in \(\Omega\) (resp. \(F\)-stable)  if
$$
\frac{d}{dt}\FF\big ( \varphi_t(M),\Omega\big)\Big|_{t=0}=0 \qquad\Biggl(\text{resp. } \quad \frac{d^2}{dt^2}\FF\big ( \varphi_t(M),\Omega\big)\Big|_{t=0}\ge0\Biggr)
$$
for every $\varphi_t(x)=x+tg(x)$ one-parameter family of diffeomorphisms (for \(t\) small enough) generated by a vector field \(g\in C_c^1(\Omega,\R^{m+1})\). 

In this setting our main result reads as follows, see Theorem \ref{thm:area-blowupset} for the more general statement and Definition \ref{def:mh} for the definition of \((m,h)\)-sets with respect to a given integrand \(F\):
\begin{Theorem}\label{thm:mainintro}
	Given a sequence of \(F\)-stable  \(m\)-dimensional manifolds $(M_i)_i$ and $h>0$ such that
	\begin{equation*}
\limsup_{i}\H^{m-1}(\partial M_i\cap K)<+\infty.
	\end{equation*}
Then the area-blow up set
\[ 
Z:= \{ x \in \overline \Omega \colon \limsup_{i \to \infty} \H^m(M_i\cap B_r(x))=+\infty \text{ for every $r>0$ } \}
\]
is an $(m,h)$-set in $\Omega$ with respect to $F$. 
\end{Theorem}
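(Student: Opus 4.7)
The plan is to adapt White's strategy from \cite{White2016} to the anisotropic setting, verifying that $Z$ fulfills the viscosity-type condition in the definition of an $(m,h)$-set with respect to $F$. The argument should be a pointwise contradiction driven by the anisotropic first variation of $\FF$.

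First I would fix an arbitrary $x_0 \in Z$ and a $C^2$ test function $\phi$ in the one-sided contact configuration appearing in Definition \ref{def:mh}. Arguing by contradiction, I would assume the associated anisotropic curvature inequality is violated at $x_0$ by some margin $2\epsilon>0$; by continuity of $F$ and its derivatives, the violation then persists, with margin $\epsilon$, in a small neighborhood $U$ of $x_0$, uniformly over the relevant family of $m$-plane directions. I would then construct a localized test vector field
\[
g(x) \;=\; \eta(x)\,\nabla_\nu F\!\left(x, \tfrac{\nabla\phi(x)}{|\nabla\phi(x)|}\right),
\]
with $\eta$ a smooth cutoff supported in $U$, where $\nabla_\nu F$ denotes the gradient of $F$ in the normal variable. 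This choice is tailored so that the anisotropic tangential divergence of $g$ on an arbitrary $m$-plane recovers the local $F$-curvature geometry of the level sets of $\phi$.

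The core of the proof consists of two ingredients. The first is the anisotropic first variation identity applied to $M_i$: the $F$-stationarity information contained in the $F$-stability hypothesis, combined with the uniform boundary bound and the parameter $h$, yields
\[
\int_{M_i} \mathrm{div}^F_{M_i}(g)\, d\H^m \;\leq\; h\int_{M_i} F(x,\nu_{M_i})\,|g|\, d\H^m \,+\, C,
\]
with $C$ independent of $i$. The second, and main technical obstacle, is a pointwise lower bound
\[
\mathrm{div}^F_{M_i}(g)(x) \;\geq\; (h+\epsilon)\,\eta(x)\,F(x,\nu_{M_i}),\qquad x\in U,
\]
uniform over every possible normal $\nu_{M_i}$. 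In the isotropic case this reduces to an elementary Pythagoras-type inequality; in the anisotropic setting it relies on the convexity of $F(x,\cdot)$ and is essentially an algebraic lemma relating the $F$-tangential divergence of $g$ to the anisotropic trace appearing in the definition of the $(m,h)$-condition.

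Combining the two estimates gives $\epsilon\int_{M_i\cap U} \eta\,F(x,\nu_{M_i})\, d\H^m \leq C'$, uniformly in $i$. Since $F$ is bounded below by a positive constant by the convexity and non-degeneracy assumptions on the integrand, this produces a uniform mass bound $\H^m(M_i \cap U) \leq C''$, contradicting $x_0 \in Z$. The delicate step is therefore the anisotropic divergence inequality, which must be formulated so as to match the precise $(m,h)$-set definition and to hold uniformly in the orientation of the tangent plane to $M_i$; once this is established (presumably as a separate algebraic lemma in the paper, based on the convexity of $F$ in the second variable), the rest of the argument proceeds by the standard scheme outlined above.
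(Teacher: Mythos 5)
The proposal follows the right overall strategy (argue by contradiction, build a vector field from the $F$-decreasing field $D_2F(x,Df)$, test it against the first variation), but the central claim is false: the pointwise inequality
\[
\mathrm{div}^F_{M_i}(g)(x) \;\ge\; (h+\epsilon)\,\eta(x)\,F(x,\nu_{M_i})\qquad\text{on }U,
\]
``uniform over every possible normal $\nu_{M_i}$,'' does not hold. With $g = \eta\, D_2F(x,Df/|Df|)$ and a generic spatial cutoff $\eta$, the quantity $B_F(x,\nu):Dg$ recovers the $F$-mean-curvature expression for the level sets of $f$ only when $\nu$ is aligned with $Df(x)/|Df(x)|$. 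When $\nu$ is misaligned (e.g.\ $\nu\perp Df$, so the tangent plane of $M_i$ contains $Df$), the anisotropic tangential divergence sees second derivatives of $f$ in directions where the viscosity inequality gives no information, and $B_F(x,\nu):Dg$ can be arbitrarily negative relative to $F(x,\nu)$. Even in the isotropic case this pointwise bound fails; it is not an elementary Pythagoras/algebraic lemma.

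The paper's proof repairs exactly this. Instead of a generic spatial cutoff, one tests with $Y(x) = -\varphi(x)\,\eta(f(x))\,X(x)$, where $\eta$ is an increasing piecewise-linear function of the \emph{height} $f(x)$ (not of $x$). Differentiating the factor $\eta(f(x))$ produces the term $\varphi\,\eta'(f)\,X\otimes Df$, and by strict convexity of $F$ one has
\[
B_F(x,\nu):\bigl(X\otimes Df\bigr)\;\ge\; c_F\,|Df|\,{\dist}_{\mathbb{RP}^m}\!\Bigl(\tfrac{Df}{|Df|},\nu\Bigr)^2\,F(x,\nu),
\]
a \emph{quadratic} gain in the misalignment $d(x,\nu)$. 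The error in the $\eta\circ f$-term, coming from replacing $B_F(x,Df):DX$ by $B_F(x,\nu):DX$, is only \emph{linear} in $d(x,\nu)$ but is multiplied by $\eta\circ f$, which is at most $2\eta_2$. Choosing $\eta_2$ small so that $\delta\mu - 2C_F\mu t + \tfrac12 c_F t^2\ge \tfrac\delta2\mu$ for $\mu\le 2\eta_2$ absorbs the error into the quadratic gain. Your argument lacks this compensating mechanism: the cutoff $\eta(x)$ carries no structural relation to $f$, so there is no $\eta'\cdot X\otimes Df$ term, and the misalignment error cannot be controlled. Also, the boundary contribution from $\Gamma_i=\partial M_i$ is handled in the paper via $\varphi$ being constant on $B_r$ together with the fact that the set $\{f\ge -\eta_1\}\cap (B_{2r}\setminus B_r)$ is disjoint from $Z$ (so the masses there are bounded), which your sketch glosses over.
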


Beside its intrinsic interest, our main motivation for Theorem \ref{thm:mainintro}  is that, in contrast to the case of the area functional,  for manifolds which are stationary with respect to parametric integrand,  no monotonicity formula is available, \cite{Allard74}. In particular, a local area bound of the form
\begin{equation}\label{estimate}
\H^{m}(M\cap B_r(x))\le C(M,m) r^m
\end{equation}
is not know to hold true. This prevents, a priori,  the possibility to establish the convergence of the rescaled surfaces \(M_{x,r}=(M-x)/r\) in order to study the local behavior of a stationary surface.  Note that, for (isotropic) minimal surface, \eqref{estimate} is a trivial consequence of the monotonicity formula.  

Using Theorem \ref{thm:mainintro}, we can prove boundary curvature estimates for two dimensional \(F\) -stable surfaces, see also Theorem \ref{thm:curvature estimates at the boundary} for a more general statement:

\begin{Theorem}\label{thm:boundaryintro}  Let \(\Omega\subset \R^3\) be uniformly  convex, \(F\) be a uniformly elliptic integrand and let  $\Gamma\subset \Omega$ be a $C^{2,\alpha}$ embedded curve.  Let $M$ be an $F$-stable, $C^2$ \(2\)-dimensional embedded surface in $\Omega$ such that $\partial M = \Gamma$. Then there exist a constant $C>0$ and a radius $r_1>0$ depending only on $F, \Omega, \Gamma$  such that 
\begin{equation*}
 \sup_{\substack{p\in \Omega \\ \dist(p, \Gamma)< r_1}} r_1 |A_M(p)| \le C.
\end{equation*}
where \(A_M\) is the second fundamental form of \(M\). Furthermore the constants are uniform as long as \(\Gamma\), \(\Omega\) and \(F\) vary in compact subsets of, respectively,  embedded \(C^{2,\alpha}\) curves,  uniform convex domains and uniformly convex \(C^2\) integrands. 
\end{Theorem}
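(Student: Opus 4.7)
The plan is a contradiction and blow-up argument, in the spirit of Schoen's interior curvature estimate and White's boundary version in \cite{White1987}, with Theorem \ref{thm:mainintro} playing the role usually played by the monotonicity formula. Assume the estimate fails: by the asserted compactness of the data one obtains sequences $F_n, \Omega_n, \Gamma_n$ converging in the appropriate compact class to admissible $F_\infty, \Omega_\infty, \Gamma_\infty$, $F_n$-stable embedded surfaces $M_n\subset\Omega_n$ with $\partial M_n=\Gamma_n$, and points $p_n\in M_n$ satisfying $\dist(p_n,\Gamma_n)\to 0$ and $\dist(p_n,\Gamma_n)\,|A_{M_n}(p_n)|\to\infty$. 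A standard point-picking argument (maximizing $|A_{M_n}|$ against a distance weight) replaces $p_n$ by $q_n$ for which, with $\lambda_n:=|A_{M_n}(q_n)|$, one has $\lambda_n\,\dist(q_n,\Gamma_n)\to\infty$ and $|A_{M_n}|\le 2\lambda_n$ on an intrinsic disk of rescaled radius diverging to infinity. After translating $q_n$ to the origin and dilating by $\lambda_n$, the rescaled surfaces $\tilde M_n$ satisfy $|A_{\tilde M_n}(0)|=1$ with uniform curvature bounds on compact sets; two regimes arise, according to whether the rescaled boundary escapes to infinity (interior blow-up, ambient converging to $\R^3$) or converges to an affine line (boundary blow-up, ambient converging to a half-space by uniform convexity of $\Omega_n$). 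In both cases the rescaled integrands $F_n(q_n+\cdot/\lambda_n,\nu)$ converge to the translation-invariant integrand $F_\infty(\bar q,\nu)$.

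Theorem \ref{thm:mainintro} is then invoked to overcome the absence of a monotonicity formula. The surfaces $\tilde M_n$ are $F_n$-stable and $\H^1(\partial \tilde M_n\cap K)$ is locally uniformly bounded (the codimension-one boundary rescaling is compatible), so the corresponding area blow-up set $Z$ is a $(2,0)$-set with respect to $F_\infty(\bar q,\cdot)$. In the interior case, $Z\subset\R^3$ is a $(2,0)$-set without boundary, and the strong maximum principle for such sets proved earlier in the paper allows one to slide $F_\infty(\bar q,\cdot)$-Wulff shapes (or $F_\infty$-minimal affine planes) as barriers, forcing $Z=\emptyset$. In the boundary case, $Z$ lies in a half-space with $Z$ intersecting the bounding hyperplane only along an affine line, and the same maximum principle together with the convex geometry of the half-space again yields $Z=\emptyset$.

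With $Z=\emptyset$, the rescaled surfaces have locally uniformly bounded area, so $\tilde M_n$ subconverges as varifolds; the uniform curvature bound inherited from the point-picking upgrades this to smooth convergence on compact subsets of the regular part, and the limit $\tilde M_\infty$ satisfies $|A_{\tilde M_\infty}(0)|=1$. It is a smooth, complete, $F_\infty(\bar q,\cdot)$-stable two-dimensional minimal surface either in all of $\R^3$ or in a half-space with affine boundary. A Bernstein-type rigidity result for two-dimensional stable anisotropic minimal surfaces with translation-invariant uniformly elliptic integrand (extending the isotropic argument used in \cite{White1987} and relying on the two-dimensional stability inequality) forces $\tilde M_\infty$ to be an affine plane or half-plane, contradicting $|A_{\tilde M_\infty}(0)|=1$.

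The principal obstacle is the emptiness of $Z$ in each blow-up regime: in the isotropic case White uses round spheres as one-sided barriers, whereas here the barriers must be adapted to the general uniformly elliptic limiting integrand $F_\infty(\bar q,\cdot)$, so one really needs the $(m,h)$-set maximum principle developed in the paper in full generality. A secondary delicate point is the Bernstein-type step, which crucially exploits two-dimensionality, the translation invariance of $F_\infty(\bar q,\cdot)$, and uniform ellipticity, and whose boundary version requires that the rescaled boundary is a genuine affine line in a flat hyperplane, a structure granted by uniform convexity of $\Omega$ and the $C^{2,\alpha}$ regularity of $\Gamma$.
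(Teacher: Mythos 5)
Your plan is a genuinely different route from the paper's, and it contains two real gaps.

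The paper does not perform a point-picking blow-up at all. It proceeds in three steps: (1) Proposition \ref{prop:no-boundary blow up} establishes \emph{density ratio bounds} centred on $\Gamma$ by rescaling at boundary points, showing via the Hopf lemma and the strict $F$-convexity of $\Omega$ that the area blow-up set of the rescaled surfaces is trapped inside a wedge, and then using the $(m,0)$-set characterization (Proposition \ref{prop:equivalence}(ii)) to see that a wedge cannot contain a nonempty $(m,0)$-set; (2) Lemma \ref{lem:general_area_bound_in_dimension2} propagates these boundary density bounds into a full neighbourhood of $\Gamma$ using the stability inequality \eqref{eq:allard inequality} together with Simon's extended monotonicity formula for $2$-varifolds with $L^2$ mean curvature; (3) the curvature estimate is then read off from White's boundary curvature estimate \cite[Theorem 5.2]{White1991}, which takes density bounds as input. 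In other words, the $(m,h)$-set machinery is used \emph{only} at the boundary, where convexity forces $Z$ into a wedge; the interior is handled by an entirely separate tool (Simon's monotonicity), and the hard Schoen/Bernstein analysis is outsourced to \cite{White1991}.

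The first gap in your argument is the assertion that in the interior blow-up regime, ``sliding $F_\infty$-Wulff shapes forces $Z=\emptyset$.'' This cannot work: $\R^{m+1}$ itself, or any affine hyperplane, is trivially a $(2,0)$-set, and in the interior case there is no a priori confinement of $Z$ in any direction that would make a barrier-sliding argument bite. The paper's emptiness argument succeeds precisely because the boundary blow-up traps $Z$ inside the wedge \eqref{eq:subset}; for interior area bounds one needs a genuinely different input (here, stability plus Simon's monotonicity). Without interior area bounds you cannot extract a smooth limit from the point-picking blow-up, so this step is essential and missing.

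The second gap is your invocation of ``a Bernstein-type rigidity result for two-dimensional stable anisotropic minimal surfaces with translation-invariant uniformly elliptic integrand.'' This is not a folklore fact. The isotropic statement (do Carmo--Peng, Fischer-Colbrie--Schoen, Pogorelov) depends delicately on the interaction of the stability inequality with the Gauss equation, and its anisotropic analogue is a nontrivial theorem that is neither proved in this paper nor cited; the paper deliberately avoids it by reducing, once density bounds are in hand, to the already-established curvature estimate of \cite{White1991}. If you wish to keep the Schoen-type contradiction structure, you would need to either supply a proof of the anisotropic stable Bernstein theorem (including its boundary version in a half-space) or give a precise reference, and you would need to close the interior area-bound gap by an argument analogous to the paper's Lemma \ref{lem:general_area_bound_in_dimension2}.
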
 

Let us conclude this introduction with a few remarks on the proof of the main results. To prove Theorem \ref{thm:area-blowupset}, we follow the proof of White in  \cite{White2016}, and we aim to show that if the blow up set is not an \((m,h)\)-set, than one can provide a vector field yielding a   negative first variation. This vector field is what in \cite{SolomonWhite} is called an \(F\)-decreasing vector field and its construction  seems to be possible only in co-dimension one, which is the reason for our restriction to this setting. The proof of the boundary curvature estimates will easily follow from~\cite{White1987}, once we can show that the mass density ratios
\[
\frac{\H^{2}(M\cap B_r(x))}{r^2}
\]
are bounded. In the interior we can rely on the extended monotonicity formula for \(2\)-dimensional varifolds with curvature in \(L^2\) (note that by stability one easily proves that locally \(|A|\in L^2\)). At the boundary we perform a rescaling  argument and we use  our assumption on \(\Omega\) to show that that the area blow up set of the sequence of rescaled surfaces must be contained in a wedge. Since Theorem \ref{thm:area-blowupset} implies that this is a \((2,0)\)-set, a simple maximum principle argument shows that it is empty, yielding the desired bound.

\subsection*{Organization of the paper} The paper is organized as follows: in Section \ref{sec:preliminaries} we recall some preliminary results and definitions and we compute the explicit formula for the first variation of a smooth manifold. In Section \ref{sec:mh} we give the definition of \((m,h)\)-sets, we show some of their properties and we prove Theorem \ref{thm:area-blowupset}, from which Theorem \ref{thm:mainintro} readily  follows. In Section \ref{sec:bound} we prove Theorem \ref{thm:curvature estimates at the boundary}, which implies Theorem \ref{thm:boundaryintro}.

 \subsection*{Acknowledgements}
The work of  G.D.P.  is supported by the INDAM-grant ``Geometric Variational Problems".

\section{Notation and preliminaries}\label{sec:preliminaries}
We work on an open set \(\Omega\subset \R^{m+1}\) and we set   $B_r(x)=\{y\in\R^{m+1}:|x-y|<r\}$, \(B_r=B_r(0)\)  and $B:=B_{1}(0)$. We will denote \(m\)-dimensional balls by  \(B^m_r(x)\) and  we set \(B_r^m=B_r^m(0)\) and \(B^m=B_1^m\). We also let \(\mathbb S^{m}\)  be the unit sphere in \(\R^{m+1}\).

 For a matrix  \(A\in \R^{m+1}\otimes\R^{m+1}\),  \(A^*\) denotes its transpose. Given   \(A,B\in \R^{m+1}\otimes\R^{m+1}\), we define \( A:B=\trace A^* B=\sum_{ij} A_{ij} B_{ij}\), so that \(|A|^2= A:A\).

\subsection*{Varifolds}
We denote by \(\mathcal M_+(\Omega)\) (respectively \(\mathcal M(\Omega,\R^n)\), \(n\ge1\)) the set of positive (resp. \(\R^n\)-valued) Radon measures on \(\Omega\). Given a Radon measure \(\mu\), we denote by \(\spt \mu\) its support. For a  Borel set \(E\),  \(\mu\res E\) is the  restriction of \(\mu\) to \(E\), i.e. the measure defined by \([\mu\res E](A)=\mu(E\cap A)\). For an \(\R^n\)-valued Radon measure \(\mu\in \mathcal M(\Omega,\R^n)\), we denote by \(|\mu|\in \mathcal M_+(\Omega)\) its total variation and we recall that, for all open sets \(U\),
\[
|\mu|(U)=\sup\Bigg\{ \int\big \langle \varphi(x) ,d\mu(x)\big\rangle\,:\quad \varphi\in C_c^\infty(U,\R^n),\quad \|\varphi\|_\infty \le 1 \Bigg\}.
 \] 
If \(\eta :\R^{m+1}\to \R^{m+1}\) is a Borel map and \(\mu\) is a Radon measure, we let \(\eta_\# \mu=\mu\circ \eta^{-1}\) be the push-forward of \(\mu\) through \(\eta\). 
An $m$-varifold  on \(\Omega\) is  a positive  Radon measure $V$ on $\Omega\times \mathbb S^{m}$ which is even in the \(\mathbb S^{m}\) variable, i.e. such that 
$$
V(A\times S)=V(A\times (-S))\qquad\mbox{for all $A\subset \Omega$, $S\subset \mathbb S^{m}$.}
$$
We will denote with  \(\mathbb V_m(\Omega)\)  the set of all \(m\)-varifolds on \(\Omega\).

Given a diffeomorphism $\psi \in C^1(\Omega,\R^{m+1})$, we define the push-forward of $V\in\mathbb V_m(\Omega)$ with respect to $\psi$ as the varifold $\psi^\#V\in \mathbb V_m(\psi(\Omega))$ such that
\begin{multline*}
\int_{G(\psi(\Omega))}\Phi(x,\nu)d(\psi^\#V)(x,\nu)\\
=\int_{G(\Omega)}\Phi\left(\psi(x),\frac{((d_x\psi(x))^{-1})^*(\nu)}{|((d_x\psi(x))^{-1})^*(\nu)|}\right)J\psi(x,\nu^\perp) dV(x,\nu),
\end{multline*}
for every $\Phi\in C^0_c(G(\psi(\Omega)))$. Here $d_x\psi(x)$ is the differential mapping of $\psi$ at $x$ and
\[
J\psi(x,\nu^\perp):=\sqrt{\det\Big(\big(d_x\psi\big|_{\nu^\perp}\big)^*\circ d_x\psi\big|_{\nu^\perp}\Big)}
 \]
 denotes the $m$-Jacobian determinant of the differential $d_x\psi(x)$ restricted to the $m$-plane $\nu^\perp$, 
 see \cite[Chapter 8]{Simon}.

\subsection*{Integrands}
The anisotropic (elliptic) integrands  that we consider are \(C^2\) positive functions
$$F:\Omega\times (\R^{m+1}\setminus\{0\})\to \R^+$$
which are even, one-homogeneous and convex in the second variable, i.e.
\begin{equation*}\label{e:identification}
F(x, \lambda  \nu)=|\lambda| F(x,  \nu) 
\end{equation*}
and
\[
F(x,\nu_1+\nu_2)\le F(x,\nu_1)+F(x,\nu_2).
\]
We will denote with $D_1F(x,\nu)$ and $D_2F(x,\nu)$ respectively the differential of $F$ in the first and in the second variable. Denoting with $\{e^x_i\}_{i=1}^{m+1}$ the euclidean basis in $\R_{x}^{m+1}$ and with $\{e^\nu_i\}_{i=1}^{m+1}$ the euclidean basis in $\R_{\nu}^{m+1}$, we set 
\begin{equation}\label{e:fj}
\begin{aligned}
&F_{i}(x,\nu):=\langle D_2F(x,\nu),  e^\nu_i \rangle, &&( \partial_i F_j)(x,\nu)=D_{12}F(x,\nu):e^x_i \otimes  e^\nu_j 
\\
&\qquad \text{ and }  &&F_{ij}(x,\nu):=  D^2_2F(x,\nu):e^\nu_i \otimes  e^\nu_j .
\end{aligned}
\end{equation}
Note that by one-homogeneity: 
\begin{equation}\label{eulercod1}
\langle D_2 F(x, \nu),\nu\rangle = F(x,\nu)\qquad\mbox{for all \(\nu\in\R^{m+1}\setminus \{0\}\).}
\end{equation}
An integrand \(F\) is said to be uniformly elliptic on a set \(\Omega\) if there exists a constant \(\lambda>0\) such that
\[
\langle D_2^2F(x,\nu)\eta,\eta\rangle\ge  \lambda|\eta|^2\qquad\text{for all \(x\in \overline{\Omega}\), \(\nu \in \mathbb{S}^m\), \(\eta\perp \nu\)}.
\]
Given \(x\in \Omega\), we will denote  by \(F_x\)  the ``frozen'' integrand 
\begin{equation*}\label{frozen}
F_x:\mathbb S^m \to (0,+\infty), \qquad F_x(\nu):= F(x,\nu).
\end{equation*}
We define the {\em anisotropic energy} of \(V\in \mathbb V_m(\Omega)\) as 
 \begin{equation*}\label{eq:eV}
\FF(V,\Omega) := \int_{G(\Omega)} F(x,\nu)\, dV(x,\nu). 
\end{equation*}

For a vector field \(g\in C_c^1(\Omega,\R^{m+1})\), we consider the family of functions \(\varphi_t(x)=x+tg(x)\), and we note that they are diffeomorphisms of \(\Omega\) into itself for \(t\) small enough. The {\em anisotropic first variation} is  defined as 
\[
\delta_F V(g):=\frac{d}{dt}\FF\big ( \varphi_t^{\#}V,\Omega\big)\Big|_{t=0}.
\]
It can be easily shown, see \cite[Appendix A]{DePhilippisDeRosaGhiraldin}, that
 \begin{equation}\label{eq:firstvariation}
\delta_F V(g)
=\int_{G(\Omega)} \Big[\langle D_1F(x,\nu),g(x)\rangle+ B_F(x,\nu):Dg(x)  \Big] dV(x,\nu),
 \end{equation}
where  the matrix \(B_F(x,\nu)\in \R^{m+1}\otimes\R^{m+1}\) is uniquely defined by 
\begin{equation}\label{eq:range of B}
B_F(x,\nu):=F(x,\nu)\Id-\nu \otimes D_2 F(x,\nu),
 \end{equation}  
 see for instance~\cite[Section 3]{Allard84BOOK} or ~\cite[Lemma A.4]{dephilippismaggi2}. We will often omit in the sequel the dependence on $F$ of the matrix $B_F(x, \nu)$. Moreover let us note the following useful fact:
 \begin{equation}\label{e:rangeB}
 B(x,\nu)\nu=0\qquad\text{or equivalently}\qquad \range B^*(x,\nu)=\nu^\perp
 \end{equation}

 We say that a varifold \(V\in \mathbb V_m(\Omega)\) has locally bounded anisotropic first variation if $\delta_F V$ is a Radon measure on $\Omega$, i.e. if
$$|\delta_F V(g)|\leq C(K)\|g\|_{\infty}, \quad \text{ for all $g \in  C^1_c(\Omega,\R^{m+1})$ with spt$(g)\subset K \subset\subset \Omega$}.$$
Notice that, by Riesz representation theorem, we can write
$$\delta_F V(g) = - \int_\Omega \langle w, g \rangle d \|\delta_F V\|, \quad \text{ for all } g \in C^1_c(\Omega, \R^{m+1})$$
where $\|\delta_F V\|$ is the total variation of $\delta_F V$ and $w$ is $\|\delta_F V\|$-measurable with $|w|= 1$ $\|\delta_F V\|$-a.e. in $\Omega$. 
In this case, by the Radon-Nikodym  theorem, we can decompose $\|\delta_FV\|$ in its absolutely continuous and singular parts with respect to the measure $\|V\|$:
\begin{equation}\label{rappr}
\delta_F V(g)= - \int_\Omega\langle \overline{H_F}, g \rangle \,d\|V\|(x)+\int_\Omega \langle w, g \rangle \,d\sigma, \quad \text{ for all } g \in C^1_c(\Omega, \R^{m+1}).
\end{equation}
Notice that by the disintegration theorem for measures, see for instance~\cite[Theorem 2.28]{AmbrosioFuscoPallara00}, we can write
\[
V(dx,d\nu)= \|V\|(dx)\otimes \mu_x(d\nu),
\]
where \(\mu_x\in \mathcal P(\mathbb S^m)\) is a  (measurable) family of parametrized  non-negative even probability measures. 
We define  for $\|V\|$-a.e. $x \in \Omega$
$$
H_F(x):=  \frac{\overline{H_F(x)}}{\int_{\mathbb S^m}F(x, \nu)d \mu_x(\nu)}.
$$ 
We will say that a varifold \(V\in \mathbb V_m(\Omega)\) has mean curvature $H_F(x)$ in $L^1(\norm{V}, \R^{m+1})$ if it has locally bounded anisotropic first variation and in the representation \eqref{rappr}, we have $\sigma=0$. In this case one can easily check that
\begin{equation}\label{eq:F-mean curvature 1} 
\delta_FV(g) = -\int_{G(\Omega)} \langle H_F, g \rangle \;F(x,\nu)\,dV(x,\nu) \text{ for all } g\in C^1_c(\Omega, \R^{m+1}).
 \end{equation}
Furthermore we will say that $H_F(x)$ is bounded by $h \in \R$ if 
$$\norm{H_F}_{F,x} := F(x,H_F(x))\le h.$$ 
In particular we say that a varifold \(V\in \mathbb V_m(\Omega)\) has anisotropic mean curvature bounded by $h(x) \in L^1(\norm{V},\R^+)$ if 
\begin{equation}\label{eq:F-mean curvature 2} 
\delta_FV(g) \le \int_{G(\Omega)} h(x) \norm{g}_{F^*,x} F(x,\nu)\,dV(x,\nu) \text{ for all } g \in C^1_c(\Omega, \R^{m+1}),
\end{equation}
where 
\[
\norm{w}_{F^*,x}= F^*(x, w)=\sup_{v: \, F(x,v)\le 1} \langle v,w\rangle.
\]

\begin{Remark}
Since all norms are equivalent on finite dimensional spaces, the above definition coincides with the classical one. However the above formulation has the advantage of being coordinate independent, namely if \(\Phi: \R^{m+1}\to \R^{m+1}\) is a diffeomorphism and \(V\) has \(F\)-mean curvature bounded by \(h\) then \(\Phi^{\#} V\) has  \(\Phi^{\#}F\)-mean curvature still bounded by \(h\) where \(\Phi^{\#}F\) is the integrand defined by
\[
\Phi^{\#}F(x,\nu)=F\left(\Phi^{-1}(x),(d_x\Phi(\Phi^{-1}(x)))^{*}(\nu)\right) \abs{\det(d_x\Phi^{-1}(x))}
\] 
and it satisfies
\[
\Phi^{\#}F(\Phi^{\#} V,\Phi(\Omega))=\FF(V,\Omega).
\]
 In particular we have $H_{\Phi^\# F}$ of the varifold $\Phi^\# V$ is $(d\Phi^*)^{-1}H_F$ where $H_F$ is the anisotropic mean curvature of the varifold $V$.
%We remark that, by the one-homogeneity of $F$, we can compute
%\begin{equation*}
%\begin{split}
%&\Phi^{\#}F(\Phi^{\#} V,\Phi(\Omega))=\int_{G(\Omega)}\Phi^{\#}F\left(\Phi(x),\frac{(d_x\Phi(x)^{-1})^*(\nu)}{|(d_x\Phi(x)^{-1})^*(\nu)|}\right)J\Phi(x,\nu^\perp) dV(x,\nu)\\
%&=\int_{G(\Omega)}F\left(x,(d_x\Phi(x))^{*}\left((d_x\Phi(x)^{-1})^*(\nu)\right)\right)|\det (d_x\Phi(x)^{-1})|\frac{J\Phi(x,\nu^\perp)}{|(d_x\Phi(x)^{-1})^*(\nu)|}dV\\
%&=\int_{G(\Omega)}F\left(x,(d_x\Phi(x))^{*}\left((d_x\Phi(x)^{-1})^*(\nu)\right)\right)|\det (d_x\Phi(x)^{-1})|\cdot|\det (d_x\Phi(x))|dV\\
%&=\int_{G(\Omega)}F(x, \nu)dV(x, \nu)=\FF(V,\Omega).
%\end{split}
%\end{equation*}
\end{Remark}

We conclude this section by computing the first variation formula for the varifold induced by a  manifold with boundary and by providing an explicit formula for its \(F\) mean curvature %, see also \cite{}.

\begin{Proposition}
Let \(M\subset \R^{m+1}\)  be an oriented  $C^2$ $m$-manifold $M$ with boundary, and let 
\[
V_M:=\H^m \res M \otimes \left ( \frac 12\delta_{\nu_x}+\frac 12 \delta_{-\nu_x}\right ),
\]
where \(\nu_x\) is the normal to \(M\) at \(x\). Then 
\begin{equation}\label{eq:boundaryvariation}
\begin{split}
\delta_FV_M(g) = \int_{\partial M} \langle B(x,\nu_x)\eta(x),g(x) \rangle d \H^{m-1} -\int_{M} \langle H_F(x,M), g(x) \rangle \;F(x,\nu_x)d\H^m,
\end{split}
\end{equation}
for all $g\in C^1_c(\Omega, \R^{m+1})$. Here $\eta(x)$ denotes the conormal of $\partial M$ at $x$, \(H_F(x,M)\) is parallel to \(\nu_x\) and satisfies
\begin{equation}\label{eq:H_M}
	 -F(x,\nu_x)H_{F}(x,M)= \Bigl(D_2^2F(x,\nu):A+\sum_{i} (\partial_i F_i)(x,\nu)\Bigr)\nu_x.
\end{equation}
Here $A$ is the second fundamental form\footnote{Note that by this sign convention the second fundamental form is positive definite for a convex set with respect to the \emph{outer} normal.} of $M$ defined  by 
\[
A(\tau_1,\tau_2)= \langle \tau_1 , D_{\tau_2} \nu\rangle \text{for $\tau_1,\tau_2 \in T_x$}
\]
 and we are adopting the convention in \eqref{e:fj}.
\end{Proposition}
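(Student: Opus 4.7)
The plan is to reduce \eqref{eq:boundaryvariation} to the general identity \eqref{eq:firstvariation} applied to $V=V_M$, and then to integrate by parts on $M$. Since $F$ is even in its second argument, $D_1 F$ is even and $D_2 F$ is odd in $\nu$, so $B(x,\nu)=B(x,-\nu)$. The symmetric factor $\tfrac12\delta_{\nu_x}+\tfrac12\delta_{-\nu_x}$ in $V_M$ therefore contributes identically for both orientations, and \eqref{eq:firstvariation} gives
\begin{equation*}
\delta_F V_M(g) = \int_M \bigl[\langle D_1 F(x,\nu_x),\, g(x)\rangle + B(x,\nu_x) : Dg(x)\bigr]\,d\H^m(x).
\end{equation*}

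By \eqref{e:rangeB} we have $B\nu=0$. Splitting $Dg=(Dg)P+(Dg)(\nu\otimes\nu)$ with $P=I-\nu\otimes\nu$, the normal part contracts to zero against $B$, so $B:Dg=\sum_{ij}B_{ij}(x,\nu_x)\,\partial^M_j g_i$, where $\partial^M_j$ denotes the tangential derivative on $M$. Applying the product rule $B_{ij}\partial^M_j g_i=\partial^M_j(B_{ij}g_i)-(\partial^M_j B_{ij})g_i$ together with the divergence theorem on $M$ for the $\R^{m+1}$-valued vector field $v^j(x):=\sum_i B_{ij}(x,\nu_x)g_i(x)$, the isotropic mean-curvature contribution vanishes because $\langle v,\nu\rangle=\langle g,B\nu\rangle=0$, and the boundary piece produces exactly $\int_{\partial M}\langle B\eta,g\rangle\,d\H^{m-1}$. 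This yields
\begin{equation*}
\delta_F V_M(g) = \int_{\partial M}\langle B\eta,g\rangle\,d\H^{m-1} + \int_M \langle D_1 F - W^M,\, g\rangle\,d\H^m,\qquad W^M_i:=\sum_j\partial^M_j B_{ij}(x,\nu(x)).
\end{equation*}
It remains to identify $D_1 F-W^M$ with $-F H_F\,\nu$.

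For \eqref{eq:H_M} I expand $W^M$ using $B_{ij}=F\delta_{ij}-\nu_i F_j$ and the chain rule
\begin{equation*}
\partial^M_j[f(x,\nu(x))] = \sum_k P_{jk}(\partial_k f)(x,\nu) + \sum_l f_l(x,\nu)\,\partial^M_j\nu_l.
\end{equation*}
Setting $A_{ij}:=\partial^M_j\nu_i$, the first-order-in-$A$ contributions of the form $\sum_l F_l\partial^M_i\nu_l-\sum_j F_j\partial^M_j\nu_i=\sum_j F_j(A_{ji}-A_{ij})$ cancel thanks to the symmetry $A=A^*$ (obtained by extending $\nu$ as the gradient of the signed distance, so that $D\nu$ is symmetric, satisfies $(D\nu)\nu=0$, commutes with $P$, and hence $A=(D\nu)P=P(D\nu)=A^*$; tangential derivatives are extension-independent, so the identity holds intrinsically). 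After this simplification $(D_1F-W^M)_i$ is manifestly a multiple of $\nu_i$ with coefficient $\langle\nu,D_1F\rangle+\sum_{jk}P_{jk}(\partial_k F_j)+D_2^2F:A$, where the last piece arises from $F_{jl}A_{lj}=F_{jl}A_{jl}$ by symmetry of $A$. Writing $P_{jk}=\delta_{jk}-\nu_j\nu_k$ and differentiating Euler's identity \eqref{eulercod1} in $x_k$ gives $(\partial_k F)=\sum_j\nu_j(\partial_k F_j)$, whence $\langle\nu,D_1F\rangle=\sum_{jk}\nu_j\nu_k(\partial_k F_j)$ cancels the $\nu\otimes\nu$ part of $\sum_{jk}P_{jk}(\partial_k F_j)$, leaving $\sum_j(\partial_j F_j)$. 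Combined with the $D_2^2F:A$ term, this matches the right-hand side of \eqref{eq:H_M}.

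The main obstacle is purely \emph{bookkeeping}: the chain-rule expansion of $\partial^M_j B_{ij}(x,\nu(x))$ generates several terms whose cancellations hinge on three structural ingredients---namely $B\nu=0$, the symmetry of the shape operator $A=A^*$, and the $x$-differentiated Euler identity. Once these are invoked in the right order, the assembly is mechanical and no ingredient beyond the $C^2$ regularity of $M$ and $F$ is needed.
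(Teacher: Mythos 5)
Your proof is correct and follows essentially the same approach as the paper: specialize the general first-variation formula \eqref{eq:firstvariation} to $V_M$, use $B\nu=0$ to reduce $B:Dg$ to a tangential divergence, apply the divergence theorem on $M$ (with no mean-curvature contribution since $B^*g$ is tangential) to produce the boundary term, and identify the bulk coefficient by expanding $\partial^M_j B_{ij}$ via the chain rule, invoking the symmetry of the shape operator and the $x$-differentiated Euler identity. The only substantive difference is notational: you work with the projection $P=I-\nu\otimes\nu$ and component tangential derivatives $\partial^M_j$, while the paper uses orthonormal tangent frames $\tau_j$ and the decomposition $\Div=\Div_M+\langle D_\nu\cdot,\nu\rangle$, and these are exactly equivalent.
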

Note that  \eqref{eq:H_M} gives 
\[
\norm{H_F}_{F,x}= \abs{ \Bigl(D_2^2F(x,\nu):A+\sum_{i} (\partial_i F_i)(x,\nu)\Bigr)}.
\]
Moreover,  by \eqref{eq:H_M}  and the homogeneity of \(F\), if \(M=\{f=0\}\)  locally around \(x\) for a \(C^2\) function \(f\) with \(D f(x)\ne  0\), then 
\begin{equation}\label{calcolo}
\begin{split}
-F&(x,Df(x))\Big\langle H_F(x, M), \frac{Df(x)}{|Df(x)|} \Big\rangle \\
&= \trace\left(D^2_2F\left(x, \frac{Df(x)}{|Df(x)|}\right) D^2f(x)\right) + \sum_{i}(\partial_{i} F_i)\left(x, \frac{Df(x)}{|Df(x)|}\right)|Df(x)|.
\end{split}
\end{equation}

\begin{proof}
Recall that for a vector field \(X\) 
\[
\Div X=\Div_M X+\langle D_\nu X,\nu\rangle,
\]
where for any orthonormal basis $\tau_j$ of $T_xM=\nu^\perp$ one has
\[
\Div_M X=\sum_{i} \langle D_{\tau_i}X, \tau_i\rangle.
\]
Hence, if \(e_i\) is the standard orthonormal basis of \(\R^n\) and we adopt Einstein convention 
\begin{equation}\label{e:Bg}
\begin{split}
B:Dg&=\Div (B^* g)-\langle \Div B,g\rangle
\\
&=\Div_{M}(B^* g)- \Div_M (B^*e_i)g^i+\langle D_\nu(B^* g),\nu\rangle-\langle D_\nu(B^* e_i),\nu\rangle g^i
\\
&=\Div_{M}(B^* g)-\Div_M (B^*e_i)g^i
\end{split}
\end{equation}
where \(B\) is evaluated at $(x, \nu_x)$ and in the last equality  we used that $\langle \nu, B^* D_\nu g\rangle =0$ due to \eqref{eq:range of B}.  Note that \(B^*g\) is tangent to \(M\) (again by \eqref{eq:range of B}), hence by the divergence  theorem
\[
\begin{split}
\delta_FV(g) 
&=\int_{M} B:Dg+\langle D_1F(x,\nu), g\rangle
\\
&=  \int_{\partial M} \langle B(x,\nu) \eta, g \rangle - \int_{M} \left(\Div_M(B^*e_i) -\langle D_1F(x,\nu),e_i\rangle \right) g^i.
\end{split}
\]
Hence, if we set
\begin{equation}\label{e:HH}
F(x,\nu) H_F(x,M)=\Big(\Div_M(B^*e_i)- \langle D_1F(x,\nu),e_i\rangle \Big)e_i,
\end{equation}
the proof will be concluded, provided \(H_F(x,M)\) satisfies \eqref{eq:H_M}. This follows by direct computations since 
\begin{equation}\label{e:freddo1}
\begin{split}
\Div_M(B^*e_i) &= \langle \tau_j, e_i\rangle \left( F_k D_{\tau_j} \nu^k +\langle D_1F,\tau_j\rangle\right) \\
& \quad - \langle \tau_j, D_{\tau_j}(D_2F)\rangle \langle \nu, e_i\rangle - \langle \tau_k, D_2F\rangle \langle D_{\tau_k}\nu, e_i\rangle \\
&=\langle \tau_j, e_i\rangle \langle D_1F,\tau_j\rangle - \langle \tau_j, D_{\tau_j}(D_2F)\rangle \langle \nu, e_i\rangle, 
\end{split}
\end{equation}
where we used that $F_kD_{\tau_j}\nu^k = \langle \tau_h, D_2F\rangle \langle \tau_h, D_{\tau_j}\nu\rangle$ since $\langle \nu, D_{\tau_j}\nu\rangle =0$ and so
\begin{align*}
	\langle \tau_j, e_i\rangle F_k D_{\tau_j} \nu^k - &\langle \tau_k, D_2F\rangle \langle D_{\tau_k}\nu, e_i\rangle = \langle \tau_k , D_2 F\rangle \left( \langle \tau_j, e_i\rangle \langle \tau_k, D_{\tau_j}\nu\rangle - \langle D_{\tau_k}\nu, e_i \rangle \right)
\\
	&=\langle \tau_k , D_2 F\rangle \langle \tau_j, e_i\rangle \left(\langle \tau_k, D_{\tau_j}\nu\rangle - \langle \tau_j, D_{\tau_k}\nu\rangle\right) =0.
\end{align*}
Now we note that
\begin{equation}\label{e:freddo2}
\begin{split}
\langle \tau_j, e_i\rangle \langle D_1F,\tau_j\rangle&= \langle D_1F,e_i\rangle- \langle \nu, e_i\rangle \langle D_1F,\nu\rangle 
 \\
 &= \langle D_1F,e_i\rangle-   \langle \nu, e_i\rangle D_{12}F:\nu \otimes \nu, 
 \end{split}
\end{equation}
where in the last equality we have used the one-homogeneity of \(D_1F\). Furthermore
\begin{equation}\label{e:freddo3}
\begin{split}
	\langle \tau_j, D_{\tau_j}(D_2F)\rangle &=D_{12} F:\tau_j\otimes \tau_j+ D_2^2F(\tau_j, D_{\tau_j}\nu)
	\\
	&=  D_{12} F:\tau_j\otimes \tau_j+ D_2^2F(\tau_j, \tau_\ell)A_{\ell j},
\end{split}
\end{equation}
where \(A_{\ell j}=\langle D_{\tau_j}\nu,\tau_{\ell}\rangle\) is the second fundamental form of \(M\). Combining \eqref{e:freddo1}, \eqref{e:freddo2} and \eqref{e:freddo3},  we get \eqref{eq:H_M} since
\begin{align*}
\Div_M&(B^*(x,\nu)e_i)-\langle D_1F,e_i\rangle \\
&= - \langle \nu, e_i\rangle\Big( D_{12} F:\nu\otimes \nu+ \sum_{j}D_{12} F:\tau_j\otimes \tau_j+D_2^2F(\tau_j, \tau_\ell)A_{\ell j}\Big)
\\
&= - \langle \nu, e_i\rangle\left(\partial_j F_j+ \trace (D^2F A)\right),
\end{align*}
where  in the last equality we have used that, by \eqref{e:fj}),
\[
\partial_jF_j=\sum_{j}D_{12} F:e_j\otimes e_j=D_{12} F:\nu\otimes \nu+ \sum_{j}D_{12} F:\tau_j\otimes \tau_j\,.
\]
\end{proof}

\begin{Remark}\label{rmk:fdecrasing}
Let us record here the following consequence of the above computations: if \(X=D_2F(x, a(x)\nu_x)\) on \(M\) with $a\in C^1(M, \R_+)$, then \(B^* X=0\) and thus, by \eqref{e:Bg}, \eqref{e:HH}  we get
\begin{equation}\label{eq:good vectorfield identity -1}
 -\langle H_M(x,\nu), X\rangle \, F(x, \nu_x) = B(x, \nu_x):DX + \langle D_1F(x,\nu_x), X\rangle.
 \end{equation}
 \(X\) is what is called an \(F\)-decreasing vector filed in \cite[Proposition 1]{SolomonWhite} and it will play a crucial role in the proof of our main theorem.
\end{Remark}

\section{$(m,h)$-sets} \label{sec:mh}
In this section, following \cite{White2016},  we define  \((m,h)\)-sets and we prove that the area-blow up set of a sequence of varifolds with bounded curvature is an \((m,h)\)-set. Roughly speaking an \((m,h)\)-set is a set which can not be touched by manifolds  with \(F\langle H_F, \nu \rangle\) greater than \(h\), i.e. they satisfy \(\norm{H_F} \le h\) in the viscosity sense. This can be phrased in several ways, as the following proposition shows.

\begin{Proposition}\label{prop:equivalence}
Given a closed set $Z\subset \R^{m+1}$, then the following three statements are equivalent.
\begin{itemize}
\item[(i)] If $f: \Omega \to \R$ is a $C^2$-function and if $f|_Z$ has a local maximum at $p$, then 
\begin{equation*}
\inf_{v \in \mathbb S^m} F_{ij}(p,v) D_{ij}f(p)  + (\partial_{i} F_i)\left(p, \frac{D f(p)}{|D f(p)|}\right)|D f(p)| \le h \,\abs{Df(p)},
 \end{equation*}
 where the second term in the left hand side is intended to be zero when $D f(p) = 0$.
\item[(ii)]  If $f: \Omega \to \R$ is a $C^2$-function and if $f|_Z$ has a local maximum at $p$ and $D f(p) \neq 0$, then  
\begin{equation*}
 F_{ij}\left(p,\frac{D f(p)}{\abs{D f(p)}}\right) D_{ij}f(p)+ (\partial_{i} F_i)\left(p,\frac{D f(p)}{|D f(p)|}\right)|D f(p)| \le h\, \abs{Df(p)}.
 \end{equation*}
\item[(iii)] Let $N$ be a relative closed domain in $\Omega$ with smooth boundary, such that $Z\subset N$ and $p \in Z \cap \partial N$, then the $F$-mean curvature $H_F(p)$ of $\partial N$  satisfies
\[ F(p, \nu_{\textnormal{int.}}(p)) \langle H_F(p), \nu_{\textnormal{int.}}(p) \rangle \le h.\]
\end{itemize} 
where \(\nu_{\textnormal{int.}}\) is the \emph{interior} normal to \(N\).
\end{Proposition}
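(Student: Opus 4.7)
The strategy is to establish the cycle of implications through the identity \eqref{calcolo} (which yields (ii)~$\Leftrightarrow$~(iii)) together with two perturbation tricks that close the loop (i)~$\Leftrightarrow$~(ii). Only one piece is completely trivial, namely (ii)~$\Rightarrow$~(i) when $Df(p)\neq 0$, which follows from the inequality $\inf_{v}F_{ij}(p,v)D_{ij}f(p)\le F_{ij}(p,Df/|Df|)D_{ij}f(p)$; every other direction requires an auxiliary test function.

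For (ii)~$\Leftrightarrow$~(iii), given $N$ with smooth boundary, $Z\subset N$, and $p\in Z\cap\partial N$, I take a local defining function $f$ with $N=\{f\le 0\}$ and $Df(p)\ne 0$; then $f|_Z\le 0=f(p)$ provides the required local maximum with non-vanishing gradient, and formula \eqref{calcolo} rewrites the LHS of (ii) as $-F(p,Df(p))\langle H_F(p,\partial N),Df(p)/|Df(p)|\rangle$. Using $1$-homogeneity and evenness of $F$, and noting $\nu_{\textnormal{int}}=-Df(p)/|Df(p)|$, this equals $|Df(p)|\,F(p,\nu_{\textnormal{int}})\langle H_F(p,\partial N),\nu_{\textnormal{int}}\rangle$, and dividing by $|Df(p)|$ produces (iii). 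The converse is symmetric: given $f$ with $f|_Z$ maximum at $p$ and $Df(p)\ne 0$, the sublevel set $N:=\{f\le f(p)\}$ is a smooth local domain containing $Z$ with $p\in\partial N$, and (iii) applied to $N$ gives (ii) for $f$ through \eqref{calcolo}.

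For (ii)~$\Rightarrow$~(i) in the degenerate case $Df(p)=0$, I first reduce to a \emph{strict} local maximum by replacing $f$ with $f-\delta|x-p|^2$ and sending $\delta\to 0$ at the end (this only contributes the harmless term $-2\delta\,\trace F_{ij}(p,v)$ to the infimum). Then, for any fixed $v\in\mathbb S^m$, let $p_\epsilon\in Z\cap\overline{B_r(p)}$ maximize $\tilde f_\epsilon(x):=f(x)-\epsilon\langle v,x-p\rangle$. Strictness forces $p_\epsilon\to p$ into the open ball for small $\epsilon$, and a generic choice of $v$ (avoiding a meager set depending on $f$) makes $D\tilde f_\epsilon(p_\epsilon)\ne 0$ along a sequence. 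Applying (ii) at $p_\epsilon$ to $\tilde f_\epsilon$ and passing to the limit along a subsequence for which $\nu_\epsilon:=D\tilde f_\epsilon(p_\epsilon)/|D\tilde f_\epsilon(p_\epsilon)|\to\nu_*\in\mathbb S^m$ gives $F_{ij}(p,\nu_*)D_{ij}f(p)\le 0$, since the $(\partial_i F_i)$-term and the RHS of (ii) both involve $|D\tilde f_\epsilon(p_\epsilon)|\to 0$; this in turn bounds the infimum by zero.

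The delicate step, and the main obstacle, is the upgrade (i)~$\Rightarrow$~(ii). Given $f$ with $c:=|Df(p)|>0$ and $\nu:=Df(p)/c$, I consider the Hessian perturbation
\[
f_t(x):=f(x)+\frac{t}{2c}\bigl[f(x)-f(p)\bigr]^2,\qquad t>0,
\]
which satisfies $Df_t(p)=Df(p)$, $D^2 f_t(p)=D^2 f(p)+tc\,\nu\otimes\nu$, and, via the factorization $f_t-f(p)=(f-f(p))\bigl(1+\tfrac{t}{2c}(f-f(p))\bigr)$, keeps its local max on $Z$ at $p$ in a sufficiently small ball. Since $F_{ij}(p,\nu)\nu_i\nu_j=0$ by Euler's identity, the $(\partial_i F_i)$-term and $|Df_t|$ are $t$-independent and applying (i) to $f_t$ yields
\[
\inf_{v\in\mathbb S^m}\bigl[F_{ij}(p,v)D_{ij}f(p)+tc\,\alpha(v)\bigr]+(\partial_i F_i)(p,\nu)\,c\le hc,\qquad \alpha(v):=F_{ij}(p,v)\nu_i\nu_j\ge 0.
\]
The task then reduces to showing $\inf_v[F_{ij}(p,v)D_{ij}f(p)+tc\,\alpha(v)]\ge F_{ij}(p,\nu)D_{ij}f(p)-O(1/t)$; this bound comes from balancing the second-order coercivity $\alpha(v)\gtrsim|v-\nu|^2$ near $\pm\nu$ (a consequence of the uniform ellipticity of $F$ on $\nu^\perp$, which guarantees that $\alpha$ vanishes only at $\pm\nu$) against the first-order Lipschitz wobble of $v\mapsto F_{ij}(p,v)D_{ij}f(p)$ through a one-variable minimization in $|v-\nu|$. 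Letting $t\to\infty$ then produces (ii) for $f$, completing the equivalence.
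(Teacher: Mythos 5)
Your overall architecture is sound and the \((i)\Rightarrow(ii)\) direction is actually cleaner than what the paper does, but two of the three blocks have genuine gaps.

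\emph{The clean part: \((i)\Rightarrow(ii)\).} Your perturbation \(f_t=f+\frac{t}{2c}(f-f(p))^2\) plays exactly the role that the paper's \(\lambda^{-3/2}(e^{\lambda d}-1)\) plays (both append a rank-one term \(\propto\nu\otimes\nu\) to the Hessian while fixing the gradient at \(p\)), but yours avoids the signed-distance function and the cut-off and is therefore a genuinely simpler route. The compactness argument \(v_t\to\pm\nu\) (or your quantitative \(O(1/t)\) version) then closes the loop. Note that, like the paper, you need strict convexity of \(F_p\) to guarantee \(\alpha(v)>0\) off \(\pm\nu\) — the paper invokes this explicitly at the analogous point, so you are not assuming more than they do, but you should state it.

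\emph{Gap one: \((ii)\Rightarrow(i)\) in the degenerate case.} Your linear perturbation \(\tilde f_\epsilon(x)=f(x)-\epsilon\langle v,x-p\rangle\) is natural, but the assertion that ``a generic choice of \(v\) makes \(D\tilde f_\epsilon(p_\epsilon)\neq 0\) along a sequence'' is unproved, and it is not clear how to prove it: the bad set is \(\{\epsilon: Df(p_\epsilon)=\epsilon v\}\), and since \(p_\epsilon\) is itself a nonsmooth, \(\epsilon\)-dependent, set-valued maximizer, you cannot rule out this set accumulating at \(0\) for every \(v\). Moreover, even if \(D\tilde f_\epsilon(p_\epsilon)=0\) you cannot invoke \(D^2f(p_\epsilon)\le 0\), because \(p_\epsilon\) is a maximum only on \(Z\cap\overline{B_r(p)}\), not on the ambient ball. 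The paper sidesteps both issues with a doubling-of-variables penalization \(g_\lambda(x,y)=f(y)-\lambda|x-y|^4\): when \(x_\lambda=y_\lambda\), the map \(y\mapsto g_\lambda(x_\lambda,y)\) has an interior ambient maximum, so the second-order test applies and kills the infimum directly; when \(x_\lambda\neq y_\lambda\), the gradient is automatically non-zero. Your proposal has no substitute for this dichotomy.

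\emph{Gap two: \((ii)\Leftrightarrow(iii)\).} The identity \eqref{calcolo} is indeed the algebraic content, but the translation between ``\(C^2\) test function on \(\Omega\)'' and ``relatively closed domain with smooth boundary'' is not as immediate as you suggest. For \((iii)\Rightarrow(ii)\), the sub-level set \(N:=\{f\le f(p)\}\) need not have smooth boundary, since \(f(p)\) may be a critical value of \(f\) away from \(p\); one has to perturb \(f\) via Sard's theorem (this is the paper's Lemma~\ref{le1}). For \((ii)\Rightarrow(iii)\), a ``local defining function'' for \(N\) is not a \(C^2\) function on all of \(\Omega\); one must glue it with the signed distance to \(\partial N\) and a proper exhaustion function (the paper's Lemma~\ref{le2}). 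These are routine, but as written your argument asserts the reductions rather than providing them.

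In summary: your \((i)\Rightarrow(ii)\) is a correct and nicer alternative; your \((ii)\Rightarrow(i)\) degenerate case needs the doubling trick or a rigorous replacement for the genericity claim; your \((ii)\Leftrightarrow(iii)\) needs the two smoothing/extension lemmas.
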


We can now give the following  definition

\begin{Definition}\label{def:mh}
Given an elliptic integrand \(F\) and and open set \(\Omega\) of \(\R^{m+1}\), we say that a relatively closed set set $Z\subset \Omega$ is an $(m,h)$-set with respect to \(F\) if it satisfies  one of the three equivalent conditions of Proposition \ref{prop:equivalence}.
\end{Definition}

Let us  prove Proposition \ref{prop:equivalence}.
\begin{proof}[Proof of Proposition \ref{prop:equivalence}]

\emph{ (ii) $\Rightarrow$ (iii):} This is an easy consequence of \eqref{calcolo} and of the elementary  Lemmas \ref{le1} and \ref{le2} below. Note that $\nu_{\text{int.}}(p)= - \frac{Df(p)}{\abs{Df(p)}}$ if $p \in \partial N$ and $N$ coincides locally with $\{ f \le f(p)\}$.

\emph{ (i) $\Rightarrow$ (ii):}  Suppose $Z$ fails to have property (ii), we will show that also property (i) cannot be satisfied by $Z$. Following the argument in \cite[Lemma 2.4]{White2016}, we can construct a function $f \in C^\infty(\Omega, \R)$ such that $f|_Z$ attains its maximum at a unique point $p \in Z$, i.e.
$$f(x)<f(p)\qquad \forall x \in Z,$$
$D f(p)\neq 0$, the super-level set $\{ x: f(x)\ge a \}$ is compact for every $a \in \R$ and 
\begin{equation}\label{1}
F_{ij}\left(p,\frac{D f(p)}{\abs{D f(p)}}\right) D_{ij}f(p)+  (\partial_{i} F_i)\left(p,\frac{D f(p)}{|D f(p)|}\right)|D f(p)| >  h \abs{Df}(p)
 \end{equation}
Up to translation, rotation and multiplication of $f$ by $\abs{D f(p)}^{-1}$, we can assume without loss of generality that $p=0$ and $D f(p)= e_{m+1}$. 

It is easy to verify that there exists an open neighborhood $U \ni p$ such that $\Sigma_0:=\{ x \colon f(x)=f(p)\}\cap U$ is a smooth sub-manifold of $\Omega$. Moreover, since $\Sigma_0$ is a level set of $f$, we know that
\begin{equation}\label{normale}
\nu_{\Sigma_0}(p)=D f(p)= e_{m+1},
\end{equation}
where $\nu_{\Sigma_0}(p)$ denotes the unit normal to $\Sigma_0$ at the point $p$.

 If we denote with $d(x)$ the signed distance function  from $\Sigma_0$
$$d(x):=\text{sign}(f(x)- f(p))\text{dist}(x,\Sigma_0),$$
%with $d(x)>0$ if $f(x)> f(p)$. 
since $\Sigma_0\cap U$ is smooth, there exists $r>0$ small enough such that $d$ is a smooth function on $B_r(p)$. Moreover $B_r(p)$ is contained in the $r$-neighborhood of $\Sigma_0$, since $p \in \Sigma_0$. 
Thanks to \eqref{normale}, we also deduce that
\begin{equation}\label{normale1}
Dd(p)= e_{m+1}.
\end{equation}
We observe that
$$B_r(p) \cap \{ d(x)>0\} \cap Z = \emptyset,$$
otherwise $f(p)$ would not be the maximum of $f|_Z$. We deduce that for every $\lambda >0$ the function 
$$g_\lambda(x):= (e^{\lambda d(x)} -1 )$$
satisfies $g_\lambda(x)\le 0$ for every $x \in Z\cap B_r(p)$. 
Fix a non negative cut off function $\varphi \in C^\infty_c(B_r(p))$ with $\varphi(x)=1$ on $B_{\frac{r}{2}}(p)$ and consider for every $\lambda>0$ the function 
\[ f_\lambda(x):= f(x) + \varphi(x) \lambda^{-\frac32} g_\lambda(x).\]
By the above considerations $f_\lambda$ restricted to $Z$ attains its maximum in $p$ and by direct calculations we have that for every $x \in B_{\frac{r}{2}}(p)$
\begin{align*}
D_if_\lambda(x)	&= D_i f(x) +  \lambda^{-\frac12} D_i d(x) e^{\lambda d(x)}\\
D_{ij} f_\lambda(x) &= D_{ij} f(x)  +  \lambda^{-\frac12} D_{ij} d(x) e^{\lambda d(x)} +  \lambda^{\frac12} D_id(x) D_jd(x)e^{\lambda d(x)}.  
\end{align*}
Evaluating the previous derivatives in $p$ and implementing \eqref{normale1}, we get
$$
D f_\lambda(p) = e_{m+1}+\lambda^{-\frac12} D_i d(p) e^{\lambda d(p)}=(1+ \lambda^{-\frac12}) e_{m+1}$$
and
\begin{equation*}
\begin{split}
D_{ij}f_{\lambda}(p) = D_{ij}f(p) + \lambda^{-\frac12} D_{ij} d(p) + \lambda^{\frac12} (e_{m+1} \otimes e_{m+1})_{ij}.
\end{split}
\end{equation*}
By homogeneity of $F$, we have $F_{m+1, m+1}(p, e_{m+1}) = 0 $, and combining the previous equation with \eqref{1}, we deduce that there exists $\lambda_0>0$ such that for all $\lambda > \lambda_0$
\begin{equation*}
 F_{ij}\left(p, e_{m+1}\right) D_{ij}f_\lambda(p) > h \abs{Df_\lambda}(p) - (\partial_{i} F_i)(p, D f_\lambda(p))\abs{Df_\lambda}(p)
\end{equation*}
We conclude that $f_\lambda$ fails the condition (i) for $\lambda$ chosen  sufficiently big, showing that
\[ \lim_{\lambda \to \infty} \inf_{v \in \mathbb S^m} F_{ij}(p,v) D_{ij}f_\lambda(p) = F_{ij}(p, e_{m+1}) D_{ij}f(p).\]
Indeed, %$\lambda^{-\frac12} F_{ij}(p,\star v) A_{ij} \to 0$ uniformly for all $v \in S^{m}$ as $\lambda \to 0$. 
for every $v \neq e_{m+1}$, the strict convexity of $F$ implies that $F_{m+1, m+1}(p, v) >0 $ and we can compute
\begin{equation*}
\begin{split}
 \lim_{\lambda \to \infty} F_{ij}(p,v) D_{ij}f_\lambda(p)&= \lim_{\lambda \to \infty}F_{ij}(p, v) D_{ij}f(p)\\
 &  \quad +\lim_{\lambda \to \infty} \lambda^{-\frac12} F_{ij}(p,  v) D_{ij} d(p) + \lambda^{\frac12} F_{m+1, m+1}(p, v) = + \infty,
\end{split}
\end{equation*}
unless \(v=e_{m+1}\).

\emph{ (ii) $\Rightarrow$ (i):} Suppose $Z$ fails to have property (i), we will show that this implies $Z$ does not satisfy property (ii). Similarly to the previous step, we can make use of the argument of \cite[Lemma 2.4]{White2016} and assume without loss of generality that $f \in C^\infty(\Omega, \R)$, $f|_Z$ attains its maximum at a unique point $p \in Z$ ($f(x)<f(p)$ for every $x \in Z$), the super-level set $\{ x: f(x)\ge a \}$ is compact for every $a \in \R$, there exist $r>0$ and $\delta>0$ small enough such that $f(x) < f(p)-\delta $ for all $x \not \in B_r(p)$ and 
\begin{equation*}
\inf_{v \in \mathbb S^m} F_{ij}(p,v) D_{ij}f(p)  >   h \abs{Df}(p) - (\partial_{i} F_i)\left(p, \frac{D f(p)}{|D f(p)|}\right)|D f(p)|,
 \end{equation*}
 where the right hand side is intended to be zero when $D f(p) = 0$.
 
If $\abs{D f}(p) \neq 0$, $Z$ fails to have property (ii) since trivially 
\[\inf_{v \in \mathbb S^m} F_{ij}(p, v) D_{ij}f(p)\le F_{ij}\left(p, \frac{D f(p)}{\abs{D f(p)}}\right) D_{ij}f(p).\]
Hence, we are reduced to consider the case $Df(p)=0$, i.e. the case in which there exists $v_0\in \mathbb S^m$ such that 
\begin{equation}\label{eq:caseDF=0}  
F_{ij}(p, v_0) D_{ij}f(p)=\inf_{v \in \mathbb S^m} F_{ij}(p,v) D_{ij}f(p) \ge \sigma >0.\end{equation}
This is done by relaxation. Up to a translation of $Z$ by $p$ and considering $f-f(p)$ we may assume without loss of generality that $p=0$ and $f(0)=0$. We can fix $M>0$ with $M \ge \sup\{\abs{f(x)}+ \abs{Df(x)}: x \in B_{2r}(0)\}$. Furthermore, for $\lambda >0$ we define the smooth auxiliary function
\[ g_{\lambda}(x,y):= f(y) - \lambda \abs{x-y}^4. \]
Observe that, by the stated properties of $f$, for every $x \in Z$ and every $y \notin B_r(0)$ we have $g_{\lambda}(x,y)\leq f(y)<-\delta < 0=g_\lambda(0,0)$. If $\abs{x-y}^4 > \frac{M}{\lambda}$, $y \in B_r(0)$ we have $g_{\lambda}(x,y) <0$. Hence for each $\lambda >0$ 
\[ m_{\lambda}:= \sup\{ g_\lambda(x,y) \colon x \in Z , y \in \Omega \} \]
is attained for a couple $(x_\lambda, y_\lambda) \in Z \times B_r(0)$ with $\abs{x_\lambda - y_\lambda}^4 \le \frac{M}{\lambda}$. \\
We moreover observe that $x_\lambda \to 0$ as $\lambda \to +\infty$. Indeed, for every $x\in Z \cap B_{r}(0)\setminus \{0\}$ and every $y \in B_{(\frac{M}{\lambda})^\frac14}(x)$, since $f(x)<0$ we get that for sufficiently large $\lambda$ 
\begin{equation*}
\begin{split}
g_\lambda(x,y) &\le f(y) = f(x) + (f(y)-f(x))\le f(x) + \sup_{z\in  B_{2r}(0)}|Df|(z) \left(\frac{M}{\lambda}\right)^\frac14 \\
&\le f(x) + M \left(\frac{M}{\lambda}\right)^\frac14 < 0=g_\lambda(0,0),
\end{split}
\end{equation*}
which implies that for $\lambda$ big enough $x$ is far enough from $x_\lambda$.\\
Since $y_\lambda \in B_{(\frac{M}{\lambda})^\frac14}(x_\lambda)$, then as $\lambda \to +\infty$ we get $x_\lambda - y_\lambda\to 0$ and consequently also $y_\lambda\to 0$.\\
For each couple $(x_\lambda, y_\lambda)$ we distinguish two cases:\\
\emph{First case: $x_\lambda = y_\lambda$.} Since $y \mapsto g_\lambda(x_\lambda, y)$ admits a global maximum in $y_\lambda$ we have $D_yg_\lambda(x_\lambda, y_\lambda)= Df(y_\lambda) =0$ and $D_{y}^2g_\lambda(x_\lambda, y_\lambda) = D^2f(y_\lambda) \le 0$. By convexity of $F$, it holds $F_{ij}(y,  v) \ge 0$ for every $(y,v)\in \Omega \times \mathbb S^m$, hence 
\[ F_{ij}(x_\lambda, v ) D_{ij}f(x_\lambda)=F_{ij}(y_\lambda, v ) D_{ij}f(y_\lambda) \le 0 \qquad \text{ for every } v \in \mathbb S^{m}. \]
Passing this inequality to the limit for $\lambda \to +\infty$ we get
$$F_{ij}(p, v ) D_{ij}f(p) \le 0 \qquad \text{ for every } v \in \mathbb S^{m}, $$
which contradicts \eqref{eq:caseDF=0}.\\
\emph{Second case: $x_\lambda \neq y_\lambda$.} As before $y \mapsto g_\lambda(x_\lambda, y)$ admits a global maximum in $y_\lambda$, hence 
\[ 0=D_yg_\lambda(x_\lambda, y_\lambda)= Df(y_\lambda) - 4\lambda \abs{y_\lambda - x_\lambda}^2 (y_\lambda - x_\lambda), \]
which gives in particular $Df(y_\lambda) \neq 0$. Furthermore 
$$\lim_{\lambda \to +\infty}\abs{Df(y_\lambda)} = 0,$$
 since $Df(0)=0$ and $y_\lambda \to 0$. 
Now consider the new function 
$$f_\lambda(x):=f(x+(y_\lambda - x_\lambda)).$$
 The function $f_\lambda|Z$ admits its maximum at $x_\lambda$ because for every $x \in Z$
\begin{equation*}
\begin{split}
f_\lambda(x)- \lambda \abs{y_\lambda - x_\lambda}^4&=f(x+(y_\lambda - x_\lambda)) - \lambda \abs{ x + (y_\lambda - x_\lambda)- x}^4\\
&=g_\lambda(x,x+(y_\lambda - x_\lambda))  \leq g_\lambda(x_\lambda,y_\lambda)\\
&= f(y_\lambda)- \lambda \abs{y_\lambda - x_\lambda}^4=f_\lambda(x_\lambda)- \lambda \abs{y_\lambda - x_\lambda}^4.
\end{split}
\end{equation*}
 Thanks to \eqref{eq:caseDF=0}, for $\lambda$ sufficiently large, we deduce that
\begin{align*}
	\inf_{v \in \mathbb S^m} F_{ij}(x_\lambda, v) D_{ij}f_\lambda(x_\lambda) = \inf_{v \in \mathbb S^m} F_{ij}(x_\lambda, v) D_{ij}f(y_\lambda) &> \frac{\sigma}{2} \\
	h \abs{Df_\lambda}(x_\lambda) - (\partial_{i} F_i)(p, D f_\lambda(x_\lambda))&< \frac{\sigma}{2}.
\end{align*}
We conclude that $Z$ fails to have property (ii).

\end{proof}

\begin{Lemma}\label{le1}
 Given $f\in C^\infty(\Omega)$ and $p \in \Omega$ such that $f(p)=0$ and $Df(p)\neq 0$, then there exists $N\subset  \Omega$ relatively closed with smooth boundary and $U\ni p$ open such that 
$$\{f\le 0 \} \subset N \quad \mbox{ and } \quad U \cap \{f\le 0 \} = U \cap N.$$
\end{Lemma}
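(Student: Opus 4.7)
The plan is to build a smooth function $\tilde u\colon\Omega\to\R$, having $0$ as a regular value, such that $\tilde u=f$ on an open neighborhood of $p$ and $\tilde u\le f$ everywhere on $\Omega$. Setting $N:=\{\tilde u\le 0\}$ then works: $N$ is relatively closed in $\Omega$, $\partial N=\{\tilde u=0\}$ is a smooth hypersurface (by regularity of the value $0$), $\{f\le 0\}\subset N$ (since $\tilde u\le f$), and $N$ agrees with $\{f\le 0\}$ on the neighborhood of $p$ where $\tilde u = f$.

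For the construction, first pick nested open neighborhoods $V'\Subset \tilde U\Subset \Omega$ of $p$ small enough that $Df$ is nonvanishing on $\tilde U$, a cutoff $\chi\in C_c^\infty(\tilde U)$ with $\chi\equiv 1$ on $V'$, and a constant $C>\sup_{\tilde U}|f|$, and set
\[
u:=\chi f-(1-\chi)C.
\]
Then $u=f$ on $V'$, $u\equiv -C$ outside $\tilde U$, and $f-u=(1-\chi)(f+C)\ge 0$, so $u\le f$ globally. The obstacle is that $\{u=0\}$ need not be a smooth hypersurface in the transition region $\tilde U\setminus V'$, since critical points of $u$ there may lie on the zero level; I resolve this by a parametric Sard perturbation that vanishes near $p$ so as not to destroy the local equality $u=f$.

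Choose $\phi\in C_c^\infty(\tilde U)$ with $\phi\equiv 0$ on $V'$ and $\phi<0$ on $\tilde U\setminus\bar V'$, and consider $H(x,s):=u(x)+s\phi(x)$ on $\Omega\times\R$. I claim $0$ is a regular value of $H$: at any zero $(x_0,s_0)$, either $\phi(x_0)\ne 0$ and then $\partial_s H=\phi(x_0)\ne 0$, or $\phi(x_0)=0$, which forces $x_0\in\bar V'$ since $\phi<0$ on $\tilde U\setminus\bar V'$ and $u\equiv -C\ne 0$ outside $\tilde U$; by smoothness together with the fact that $\chi\equiv 1$ and $\phi\equiv 0$ on the open set $V'$, both $D\chi$ and $D\phi$ vanish on $\bar V'$, so $D_xH(x_0,s_0)=Du(x_0)=Df(x_0)\ne 0$. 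By the parametric Sard theorem, $0$ is a regular value of $\tilde u:=u+s\phi$ for almost every $s\in\R$; fix any small $s>0$ in this full-measure set. Then $\tilde u\le u\le f$ gives $\{f\le 0\}\subset N$, while $\tilde u=u=f$ on $V'$ gives $V'\cap N = V'\cap\{f\le 0\}$, so taking $U:=V'$ concludes the proof. The only delicate step is the regularity check for $H$ — the rest is formal — and it hinges on the fact that $\chi$ and $\phi$ are locally constant on $V'$, which forces the vanishing of their first derivatives on $\bar V'$ by smoothness.
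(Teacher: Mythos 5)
Your proposal is correct and its overall frame is the same as the paper's: build a smooth modification $\tilde u$ of $f$ that coincides with $f$ on a neighborhood of $p$, lies (in the relevant sense) below $f$, and has $0$ as a regular value, then take $N=\{\tilde u\le 0\}$. The difference is in how the regularity of the zero level is secured. You send $u$ down to the constant $-C$ outside $\tilde U$ and then apply parametric Sard (parametric transversality) to the family $u+s\phi$, exploiting that $\phi$ is nonvanishing precisely on the transition region where critical points of $u$ could sit on the zero level. The paper instead applies ordinary Sard to $f$ itself to choose a small regular value $c>0$, and interpolates by a fixed cutoff so that the modification is $f$ near $p$, is $f-c$ far away (where $0$ is regular because $c$ is a regular value of $f$), and has nonvanishing gradient in the annulus thanks to the quantitative estimate $c\,|D\phi|<\inf_{B_r(p)}|Df|$. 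Both are legitimate; yours trades the explicit annulus gradient estimate for the heavier but standard parametric Sard machinery, and has the minor advantage of not needing any control on $|D\phi|$.

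One small inaccuracy: from $f-u=(1-\chi)(f+C)$ and $C>\sup_{\tilde U}|f|$ you only get $u\le f$ on $\tilde U$, not on all of $\Omega$ -- if $f<-C$ somewhere outside $\tilde U$ then $u\equiv -C>f$ there. This does not damage the conclusion: outside $\tilde U$ you have $\tilde u=u\equiv -C<0$, so such points lie in $N$ automatically, and the inclusion $\{f\le 0\}\subset N$ still holds. The step "$u\le f$ globally" should simply be replaced by the two-case observation that $\tilde u\le u\le f\le 0$ on $\tilde U\cap\{f\le 0\}$, while $\tilde u=-C<0$ on $\Omega\setminus\tilde U$.
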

\begin{proof}[Proof of Lemma \ref{le1}] If $0$ is a regular value of $f$, we can simply choose $N=\{ f \le 0 \}$. Otherwise we fix $r>0$ such that $\abs{Df(x)-Df(p)} \le \frac12 \abs{Df(p)}$ for all $x \in B_r(p)$.
We deduce that
\begin{equation}\label{conto in mezzo}
\abs{Df(x)}\ge  \abs{Df(p)} -\abs{Df(x)-Df(p)} \ge \frac12 \abs{Df(p)} \qquad \forall x \in B_r(p).
\end{equation}
Let $\phi \in C^\infty_c(B_r(p))$ be non negative, $\phi=1$ on $B_{\frac{r}{2}}(p)$ and $\abs{D\phi}< \frac{4}{r}$.  By Sard's theorem there is a regular value $c$ of $f$ with $0<\frac{4}{r}c < \frac{\abs{Df(p)}}{4}$.\\
We set
\[ \tilde{f}(x):= f(x) - \phi(x)c. \]
By the choice of $c$ and $\phi$ and thanks to \eqref{conto in mezzo}, we compute
$$\abs{D\tilde{f}(x)} \ge \abs{D{f}(x)}-c\abs{D\phi(x)}> \frac{\abs{Df(p)}}{2}-\frac{\abs{Df(p)}}{4}= \frac{\abs{Df(p)}}{4} \qquad \forall x \in B_r(p).$$
 Hence $0$ is a regular value of $\tilde{f}|B_r(p)$ and therefore $0$ is a regular value of $\tilde{f}$ on the whole set. Since $\tilde{f}=f$ on $U:=B_{\frac{r}{2}}(p)$, we infer that $U \cap \{f\le 0 \} = U\cap \{\tilde{f}\le 0 \}$ and we conclude that the relatively closed set $N:= \{ \tilde{f} \le 0 \}$ has the claimed properties. \end{proof}
\begin{Lemma}\label{le2}
 Given $N\subset \Omega$ relatively closed with smooth boundary and $p \in \partial N \cap \Omega$. There exists $f \in C^\infty(\Omega)$ and $U\ni p$ open such that 
$$N \subset \{f\le 0 \}\quad \mbox{ and } \quad U \cap \{f\le 0 \} = U \cap N.$$
\end{Lemma}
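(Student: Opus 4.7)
The plan is to construct a single smooth global defining function $f\in C^\infty(\Omega)$ for $N$, namely one satisfying $\{f\le 0\}=N$ throughout $\Omega$. Such an $f$ immediately yields the conclusion by taking $U$ to be any open neighborhood of $p$ in $\Omega$ (for instance $U=\Omega$), since the identity $U\cap\{f\le 0\}=U\cap N$ is then automatic.

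Since $\partial N$ is a smooth embedded hypersurface of $\Omega$, it admits an open tubular neighborhood $W\subset\Omega$ on which the signed distance function $d$ to $\partial N$, chosen negative on $\mathrm{int}(N)$ and positive on $\Omega\setminus N$, is of class $C^\infty$. On $W$ the function $d$ is already a smooth defining function for $N$. To extend it globally I would use a smooth partition of unity $\{\psi_W,\psi_-,\psi_+\}$ subordinate to the open cover $\{W,\mathrm{int}(N),\Omega\setminus N\}$ of $\Omega$ (a valid cover since any point of $\Omega$ lying in $N\setminus\mathrm{int}(N)=\partial N$ belongs to $W$), and then set
$$f(x):=\psi_W(x)\,d(x)-\psi_-(x)+\psi_+(x),$$
where each summand is extended by zero outside the support of the corresponding cutoff. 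Clearly $f\in C^\infty(\Omega)$.

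The desired property follows by direct inspection. On $\partial N$, both $\psi_-$ and $\psi_+$ vanish (since $\partial N$ is disjoint from the open sets $\mathrm{int}(N)$ and $\Omega\setminus N$), hence $\psi_W=1$ there and $f=d=0$. On $\mathrm{int}(N)$ we have $\psi_+=0$: if $\psi_W(x)=0$ then $\psi_-(x)=1$ and $f(x)=-1$, while if $\psi_W(x)>0$ then $x\in W\cap\mathrm{int}(N)$, so $d(x)<0$ and $f(x)\le\psi_W(x)d(x)<0$. Symmetrically $f>0$ on $\Omega\setminus N$. Thus $\{f\le 0\}=N$, and the lemma follows. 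There is no substantive obstacle: the only nontrivial input is the existence of the smooth tubular neighborhood around $\partial N$, which is standard, and the partition-of-unity gluing is routine.
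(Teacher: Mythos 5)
Your proof is correct, and it uses the same core ingredient as the paper (the signed distance $d$ to $\partial N$, smooth on a tubular neighborhood) but assembles the pieces differently. The paper patches $d$ only \emph{locally} near $p$: it picks a cutoff $\phi$ supported in a small ball $B_r(p)$ inside the tubular neighborhood, a smooth function $u$ that is negative on $N$ (but otherwise arbitrary), and sets $f=\phi\,d+(1-\phi)\,u$; this gives $N\subset\{f\le 0\}$ and, since $f=d$ on $B_{r/2}(p)$, the required local identity. You instead build a genuine \emph{global} defining function via a three-set partition of unity subordinate to $\{W,\,\mathrm{int}\,N,\,\Omega\setminus N\}$, obtaining the stronger conclusion $\{f\le 0\}=N$ on all of $\Omega$ (so $U=\Omega$ works). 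What the paper's version buys is that it only requires the tube around $\partial N$ near the single point $p$ and an arbitrary negative smooth function on $N$; what yours buys is a cleaner, global statement and a symmetric treatment of the two sides. One minor point worth flagging when you check smoothness of $\psi_W\,d$: the extension by zero is smooth because $\mathrm{supp}\,\psi_W$ is a closed subset of the open set $W$ where $d$ is smooth, so every point of $\Omega$ either lies in $W$ or has a neighborhood on which $\psi_W\equiv 0$. Also note the paper additionally requires $u$ to be proper; this is not needed for the statement of the lemma itself but is convenient for the way the lemma is invoked in Proposition~\ref{prop:equivalence}, where one wants super-level sets of $f$ to be compact.
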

\begin{proof}[Proof of Lemma \ref{le2}]
 Fix a smooth proper function $u: \Omega \to \R$ with $u < 0 $ on $N$. We define the signed distance function $d$ defined as
 $$d(x):=\begin{cases}-\text{dist}(x,\partial N) & \text{if } x \in N\\
 \text{dist}(x,\partial N) & \text{if } x \notin N\end{cases}.$$
 Given $r>0$, as before we fix a non negative function $\phi \in C^\infty_c(B_r(p))$, with $\phi=1$ on $U:=B_{\frac{r}{2}}(p)$. It is now straightforward to check that, choosing $r$ small enough, the function 
\[ f(x):= \phi(x) d(x) + (1-\phi(x)) u(x) \]
has the claimed properties.
\end{proof}
%The equivalence of (ii) and (iii) follows now by Equation \eqref{}.  After translation, rotation and scaling of the function $f$, we may assume that we have given a relative closed set $N$ with smooth boundary, $0 \in \partial N$, a smooth function $f$ with $f(0)=0$, $D f(0)= -e_{m+1}$ and an open neighbourhood $U\ni 0$ such that $U \cap N = U \cap \{ f \le 0 \}$. We can consequently apply Proposition \ref{calcoloprop} with $M=\partial N$. Equality \eqref{calcolo} rewrites
%\[  
%- F(0,Df(0))\Big\langle H_F(0, \partial N), \frac{Df(0)}{|Df(0)|} \Big\rangle = F_{ij}\left(0, D f(0)\right) D_{ij}f(0) + (\partial_{i} F_i)(0, D f(0)).\]
%Due to the above identity the equivalence of (ii) and (iii) is obvious. 
%\end{proof}

%{\color{red} some where we should say that $H_F(x,\partial\{f\le 0\})$ as a vector is parallel to $Df$. }

\begin{Remark}\label{Rem:1} In Proposition \ref{prop:equivalence} above, we may replace $(ii)$ with the following equivalent condition:
\begin{itemize}
\item[(ii)'] If $P$ is a paraboloid  $P(x):= a_0 + \langle a_1 , x-p \rangle + \frac12 (x-p)^t A (x-p) $ for some $a_0 \in \R, a_1 \in \mathbb S^{m} $ and $A \in \R^{(m+1)\times (m+1)}$ and if $P|_Z$ has a local maximum at $p$, then % and $D P(p) \neq 0$, then  
\begin{equation}\label{paraboloid}
F_{ij}\left(p,a_1\right) A_{ij} + (\partial_{i} F_i)(p,a_1)\le h.
 \end{equation}
\end{itemize}
Indeed, the fact that (ii) implies (ii)' is immediate. For the converse, let $f$ as in (ii) and $p$ a local maximum of $f|_Z$. Consider for any $\varepsilon>0$  the paraboloid
\[ P_{\varepsilon}(x):= \left \langle \frac{D f(p)}{\abs{D f(p)}} , x - p \right \rangle + \frac{1}{2\abs{D f(p)}} D^2f(p)( (x-p)\otimes (x-p)) - \frac{\varepsilon}{2} \abs{x-p}^2. \]
Since  $f \in C^2$, for every $\varepsilon>0$ there exists $r_\varepsilon>0$ such that \[\sup_{x \in B_{r_\varepsilon}(p)} \frac{\abs{ \frac{f(x)}{\abs{Df(p)}}-P_{\varepsilon}(x)}}{\abs{x-p}^2} \le \frac{\varepsilon}{4}\,.\] 
Then $P_{\varepsilon}|_Z$ attains its local maximum in $p$.
Moreover we compute
$$DP_{\varepsilon}(p)= \frac{D f(p)}{\abs{D f(p)}} \qquad \text{ and } \qquad D^2P_{\varepsilon} = \frac{D^2f(p)}{\abs{D f(p)}} - \varepsilon \mathbf{1}.$$
 Letting $\varepsilon \to 0$ in \eqref{paraboloid}, we deduce the inequality in (ii) for $f$ in $p$.
\end{Remark}

The following is our main theorem. The proof is based on (the proof of) the maximum principle of Solomon and White for varifolds which are stationary with respect to an anisotropic integrand, see \cite{SolomonWhite}.

\begin{Theorem}\label{thm:area-blowupset}
	Let $\Omega \subset \R^{m+1}$ be open. Consider a sequence of varifold $(V_k)_k\subset \mathbb{V}_m(\Omega)$ and $h>0$ such that for every $K \subset \subset \Omega$ it holds
	\begin{equation}\label{eq:curvature bounded by h}
	\limsup_{k \to \infty} \sup \left\{  \delta_F V_k(X) - h \int \norm{X}_{F^*,x} \, F(x, \nu) \, dV_k(x,\nu) \colon \abs{X} \le \mathbf{1}_{K} \right\} < \infty.	
	\end{equation}
Then the area-blow up set
\[ Z:= \{ x \in \overline \Omega \colon \limsup_{k \to \infty} \|V_k\|(B_r(x))=+\infty \text{ for every $r>0$ } \}\]
is an $(m,h)$-set in $\Omega$ with respect to $F$. 
\end{Theorem}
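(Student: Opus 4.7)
The plan is to argue by contradiction, adapting White's approach in \cite{White2016} by substituting his isotropic radial test field with an \(F\)-decreasing vector field in the spirit of Solomon--White (cf.\ Remark \ref{rmk:fdecrasing}). Concretely, I would seek a single test vector field \(X\in C^1_c(\Omega,\R^{m+1})\) such that, along a subsequence,
\[
\delta_F V_k(X) - h\int \norm{X}_{F^*,x}\, F(x,\nu)\, dV_k(x,\nu) \;\longrightarrow\; -\infty.
\]
Since \(-X\) is an admissible competitor and the cost term \(h\int\norm{X}_{F^*,x}F\,dV_k\) is invariant under \(X\mapsto -X\), the same quantity evaluated at \(-X\) then diverges to \(+\infty\), contradicting \eqref{eq:curvature bounded by h}.

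Assume \(Z\) is not an \((m,h)\)-set. By characterization (ii) of Proposition \ref{prop:equivalence}, together with the modification procedure employed in its proof, one can find \(f\in C^2(\Omega)\), a point \(p\in Z\), and constants \(\varepsilon,\alpha>0\) such that \(f|_Z\) attains its unique global maximum at \(p\), all super-level sets of \(f\) are compact, \(Df\neq 0\) on an open neighborhood \(U\) of \(\{f\geq f(p)-2\varepsilon\}\), and, setting \(\nu_x:=Df(x)/|Df(x)|\),
\[
F_{ij}(x,\nu_x)\, D_{ij}f(x) + (\partial_i F_i)(x,\nu_x)\, |Df(x)| \;\geq\; (h+2\alpha)\,|Df(x)| \qquad \text{for all } x\in U,
\]
the strict inequality at \(p\) being propagated to all of \(U\) by continuity after possibly shrinking \(U\).

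Following the Solomon--White construction I would then set
\[
X(x) := \varphi(f(x))\, D_2 F\bigl(x,Df(x)\bigr),
\]
with \(\varphi\in C^2(\R)\) non-negative, supported in \((f(p)-2\varepsilon,+\infty)\), positive on \((f(p)-\varepsilon,+\infty)\), and with \(|\varphi'|\) sufficiently small relative to \(\varphi\) (to be chosen below); by the compactness of super-level sets of \(f\), \(X\in C^1_c(U,\R^{m+1})\). The core of the proof is the pointwise estimate, valid for every \(x\in U\) and every \(\nu\in\mathbb S^m\),
\[
B_F(x,\nu):DX(x) + \langle D_1F(x,\nu), X(x)\rangle + h\,\norm{X(x)}_{F^*,x}\, F(x,\nu) \;\leq\; -\alpha\,\varphi(f(x))\, F(x,\nu).
\]
Once this is established, integration against \(V_k\) gives
\[
\delta_F V_k(X) - h\int \norm{X}_{F^*,x}\, F(x,\nu)\, dV_k \;\leq\; -c_0\,\alpha\,\|V_k\|(W),
\]
where \(W:=\{\varphi(f)\geq c_0\}\) is an open neighborhood of \(p\); since \(p\in Z\), \(\|V_k\|(W)\to+\infty\), producing the contradiction.

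The main difficulty is the pointwise estimate. In the direction \(\nu=\pm\nu_x\) it reduces to the strict violation of (ii): using the identity \(B^T(x,\nu_x)D_2F(x,\nu_x)=0\) of Remark \ref{rmk:fdecrasing} together with \eqref{calcolo}, the left-hand side equals \(-\varphi(f(x))\) times the anisotropic mean-curvature quantity of the level set \(\{f=f(x)\}\), which by our assumption does not exceed \(-(h+2\alpha)\varphi(f)F(x,\nu_x)\), leaving the margin \(-\alpha\varphi(f)F(x,\nu_x)\) to absorb the \(h\norm{X}_{F^*,x}F\) term. For general \(\nu\in\mathbb S^m\) one expands \(B_F(x,\nu):DX+\langle D_1F(x,\nu),X\rangle\) as a \(\varphi'(f)\) piece plus a \(\varphi(f)\) piece; the convexity and one-homogeneity of \(F\), combined with the support-function inequality \(\langle D_2F(x,\nu_x),\nu\rangle\leq F(x,\nu)\), let one bound the cross terms by \(h\norm{X}_{F^*,x}F(x,\nu)+\alpha\varphi(f)F(x,\nu)\), provided \(|\varphi'|/\varphi\) is taken small on \(\{\varphi>0\}\). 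This is essentially a quantitative \(h\)-form of \cite[Proposition 1]{SolomonWhite}, and carrying out the adaptation is the principal technical task.
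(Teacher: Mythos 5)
The overall strategy you propose (proceed by contradiction, test the first variation against an $F$-decreasing vector field in the sense of Solomon--White) is the same as the paper's, but the central technical claim on which you hang everything — the pointwise inequality
\[
B_F(x,\nu):DX(x)+\langle D_1F(x,\nu),X(x)\rangle + h\,\norm{X(x)}_{F^*,x}F(x,\nu)\;\le\;-\alpha\,\varphi(f(x))\,F(x,\nu)\qquad\text{for all }\nu\in\mathbb S^m
\]
— is not true, and your prescription ``take $|\varphi'|/\varphi$ small'' goes in exactly the wrong direction. The reason is that the failure of condition (ii) in Proposition~\ref{prop:equivalence} only controls the quantity $B(x,\nu):D\tilde X+\langle D_1F(x,\nu),\tilde X\rangle$ (where $\tilde X=D_2F(x,Df)$) when $\nu=\pm Df/|Df|$; for $\nu$ far from $\pm Df/|Df|$ this quantity has no a priori sign, and the discrepancy is only bounded by $C_F\,d(x,\nu)\,F(x,\nu)$ with $d(x,\nu)$ the projective distance from $\nu$ to $Df/|Df|$ as in \eqref{e:prdist2}. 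A concrete illustration: with $F=|\cdot|$, $f(x)=x_{m+1}+\tfrac12|x'|^2$ and $p=0$, one computes $B(0,\nu):D\tilde X(0)=m-1+\nu_{m+1}^2$, which equals $m$ at $\nu=\pm e_{m+1}$ but only $m-1$ for $\nu\perp e_{m+1}$, so for $h$ chosen between $m-1$ and $m$ the desired inequality fails in the orthogonal direction no matter what $\alpha>0$ you try.

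The only term that can compensate this error is the $\varphi'(f)$ piece of $DX$, which by uniform convexity contributes $\varphi'(f)\,B(x,\nu):\tilde X\otimes Df\ge c_F\,\varphi'(f)\,|Df|\,d(x,\nu)^2F(x,\nu)$ — a gain that is \emph{quadratic} in $d(x,\nu)$. Absorbing a linear-in-$d$ error of size $C_F\varphi(f)\,d\,F$ into a quadratic gain of size $c_F\varphi'(f)\,d^2F$ and a fixed margin $\delta\varphi(f)F$ is a discriminant condition that forces $\varphi'/\varphi\gtrsim C_F^2/(c_F\delta)$, i.e.\ $\varphi'/\varphi$ must be bounded \emph{below}, not above. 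This is precisely why the paper uses the truncated affine weight $\eta(t)=(\eta_2+t)_+$, for which $\eta'\equiv 1$ on $\{\eta>0\}$ while $\eta\le 2\eta_2$ is as small as desired: the ratio $\eta'/\eta\gtrsim 1/\eta_2$ can be made large, and then the polynomial $p(\mu,t)=\delta\mu-2C_F\mu t+\tfrac12 c_F t^2$ is nonnegative for $\mu\le 2\eta_2$. Moreover the paper never proves a pointwise estimate over all $\nu$; it integrates against $V_k$, splits into the three pieces $I,II,III$, and applies the polynomial inequality under the integral.

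There is also a localization gap: the strict inequality \eqref{eq:stimaforte} is obtained by continuity only on a small ball $B_{2r}(p)$, whereas the support of your $\varphi\circ f$ is $\{f>f(p)-2\varepsilon\}$, which White's construction guarantees to be compact but not to lie inside any prescribed neighborhood of $p$. The paper handles this with the separate spatial cutoff $\varphi(x)\in C_c^\infty(B_{2r})$, which produces the harmless boundary term $III$ supported away from $Z$; your proposal omits it. To repair the argument you would have to adopt both corrections: reinstate a spatial cutoff, and replace the condition ``$|\varphi'|/\varphi$ small'' by the opposite requirement (or abandon the pointwise viewpoint for the paper's integrated polynomial argument).
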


%{\color{blue} I think we should change the Theorem such that it considers as well a sequence of integrands $F(k)$.}

\begin{proof}
We first observe that $Z$ is a closed set. Indeed, given $\{x_n\}_{n \in \N}\subset Z$, such that $x_n \to x \in \overline \Omega$, then, for every $r>0$, there exists $n$ big enough such that $B_{r/2}(x_n)\subset B_r(x)$. We deduce that
$$\limsup_{k \to \infty} \|V_k\|(B_r(x)) \geq \limsup_{k \to \infty} \|V_k\|(B_{r/2}(x_n))=+\infty,$$
which implies that $x \in Z$ and consequently that $Z$ is closed.

Assume now that $Z$ is not an $(m,h)$-set. Hence due to Proposition \ref{prop:equivalence} there is a smooth function $f:\Omega \to \R$ and a point $p \in \Omega\cap Z$ such that $f|_Z$ has a unique local maximum at $p$, $D f(p) \neq 0$ and (ii) fails. After translation by $p$ and rotation and scaling of $f$ we may assume that $p=0$, $f(p)=0$ and $D f(p)= -e_{m+1}$. The contradiction then reads 
\begin{equation}\label{stimaforte}
F_{ij}\left(p,\frac{Df(p)}{\abs{Df(p)}}\right) D_{ij}f(p)+ (\partial_{i} F_i)\left(p,\frac{Df(p)}{\abs{Df(p)}}\right)\abs{Df(p)}> h \abs{Df(p)}.
\end{equation}
Let us define the vector field
\begin{equation}\label{eq:good vectorfield}
	X(x)=X^i(x) e_i = F_i(x,Df(x)) e_i.
\end{equation}
Firstly note that $\langle X(x), Df(x)\rangle =  F(x,Df(x))$ hence $X$ is pushing along  ``outside''  the level sets $ \{ f \le t \}$. Furthermore 
\begin{equation}\label{eq:length of the good vectorfield}
	\norm{X}_{F^*,x} = 1. 
\end{equation}
Moreover, by \eqref{eq:good vectorfield identity -1}
\begin{equation}\label{eq:good vectorfield identity 1}
 -\langle H_F(x), X\rangle \, F(x, Df(x)) = B(x, Df(x)):DX + \langle D_1F(x,Df), X\rangle \end{equation}
where $H_F(x)$ is the $F$-mean curvature of a level set $\{ f= t\}$.

Now we want to show how this vector field can be used to derive the contradiction to \eqref{eq:curvature bounded by h}.
First fix a radius $r>0$ and $\delta>0$ such that
\begin{multline}\label{eq:stimaforte}
F_{ij}\left(x,\frac{Df(x)}{\abs{Df(x)}}\right) D_{ij}f(x)+ (\partial_{i} F_i)\left(x,\frac{Df(x)}{\abs{Df(x)}}\right)\abs{Df(x)}\ge (h+\delta) \abs{Df(x)}
\\
\text{ for all } x \in B_{2r}(0)
\end{multline}
and \begin{equation}\label{eq:bounds on Df}
 \frac{1}{2} \le \abs{Df(x)} \le 2 \text{ for all } x \in B_{2r}(0).	
\end{equation}
By \eqref{eq:good vectorfield}, we compute $\langle X, \frac{Df(x)}{\abs{Df(x)}} \rangle = F(x, \frac{Df(x)}{\abs{Df(x)}})$, which combined with \eqref{eq:stimaforte}, gives the following estimate on $B_{2r}(0)$ 
\begin{equation}\label{eq:stimaforte2}
F_{ij}\left(x,\frac{Df(x)}{\abs{Df(x)}}\right) D_{ij}f(x)+ (\partial_{i} F_i)\left(x,\frac{Df(x)}{\abs{Df(x)}}\right)\abs{Df(x)}\ge (h+\delta) F(x, Df(x))
\end{equation}
By assumption we have $Z \subset \{f \le 0\}$ and $Z \cap \{f=0\}=\{0\}$, hence there exists $\eta_1>0$ such that $f(x)<-\eta_1$ for all $x \in Z \setminus B_r(0)$. 
Now we fix a non-negative cut off function $\varphi(x)$ supported in $B_{2r}(0)$ with $\varphi(x) = 1$ on $B_{r}(0)$. For $0<\eta_2<\eta_1$ to be chosen later, we define the function 
\[ \eta(t):=\begin{cases} 0 &\text{ if } t \leq -\eta_2  \\ \eta_2+t &\text{ if } -\eta_2 \leq t \end{cases}.  \]
Now we consider the vector field
\begin{equation}\label{eq:good vector field 2}
Y(x) = -\varphi(x) \eta(f(x)) X.	
\end{equation}
Then we have 
\[ -DY = \varphi \eta \circ f DX + \varphi \eta'\circ f X \otimes Df + \eta\circ f X \otimes D\varphi. \]
Hence for every $a$ we have
\begin{align*}
- \delta_FV_k(Y)=& \int \varphi \eta\circ f \left( B(x,\nu):DX + \langle D_1 F(x,\nu), X\rangle \right) \\& + \varphi \eta'\circ f \left( B(x,\nu): X \otimes Df \right) + \eta \circ f \left(B(x,\nu) : X \otimes D\varphi \right)\, dV_k(x,\nu)\\
=& \int I + II + III \, dV_k(x,\nu). 	
\end{align*}
We analyze the three terms separately.
Note that $\abs{III} \le C \mathbf{1}_{B_{2r}\setminus B_r \cap \{ f \ge -\eta_1\}}$. Since by the choice of $r$ and $\eta_1$ we have $Z \cap B_{2r}\setminus B_r \cap \{ f \ge -\eta_1\} = \emptyset$ we have
\[ \abs{ \int III \, dV_k(x,\nu) } \le O(1) \text{ for all } k. \]
Concerning $II$ we have due the uniform convexity of $F$ there is a constant $c_F$
\begin{align*}
B(x,\nu):X \otimes Df(x) &= F(x,\nu) F(x,Df) - \langle D_2F(x,\nu), Df(x)\rangle \langle D_2F(x,Df(x)), \nu \rangle\\ &\ge c_F|Df(x)| \;{\dist}_{\mathbb{RP}^{m}}\Big(\frac{Df(x)}{|Df(x)|}, \nu\Big)^2\, F(x,\nu) 
\\
&= c_F|Df(x)|  \;d(x,\nu)^2 \, F(x,\nu),  	
\end{align*}
where, for \(v,w\in \mathbb S^m\), we set
\begin{equation}\label{e:prdist}
{\dist}_{\mathbb{RP}^{m}}(v,w):=\min\{|v+w|, |v-w|\},
\end{equation}
and we  introduced the function 
\begin{equation}\label{e:prdist2}
d(x,\nu):= \;{\dist}_{\mathbb{RP}^{m}}\Big(\frac{Df(x)}{|Df(x)|}, \nu\Big).
\end{equation}
We conclude taking into account \eqref{eq:bounds on Df} 
\[ \int II\, dV_k(x,\nu) \ge \frac12 c_F \int  \varphi \eta\circ f\, d(x,\nu)^2 \, F(x,\nu)\, dV_k(x, \nu).\]
It remains to estimate $I$. By \eqref{eq:stimaforte}, \eqref{eq:good vectorfield identity 1} and the \(C^2\) regularity of \(F\), there exists a constant $C_F \ge 0$ such that
\begin{align*}
 \abs{Df(x)}\Bigl( B(x,\nu)&:DX(x) + \langle D_1F(x,\nu), X\rangle \Bigr)
\\
&\ge B(x,Df(x)):DX(x) + \langle D_1F(x,Df(x)), X\rangle \\
&\quad- C_F |Df(x)|\;{\dist}_{\mathbb{RP}^{m}}\left(\frac{Df(x)}{\abs{Df(x)}}, \nu \right)
\\
&\ge  \abs{Df(x)} (h+ \delta) F(x, \nu)  - C_F |Df(x)| \;d(x,\nu)\, F(x,\nu).
\end{align*} 
Taking additionally into account that  $\{ \eta \ge \eta_2 \} \cap B_{2r} \cap Z = \emptyset$ and \eqref{eq:bounds on Df}, we conclude  
\begin{align*} \int I \, dV_k(x,\nu) \ge& (h+\delta) \int \varphi \eta\circ f \, F(x,\nu) \, dV_k(x,\nu)\\& - 2 C_F \int_{\{\eta<\eta_2\}} \varphi \eta \circ f \, d(x,\nu)\, F(x,\nu) \, dV_k(x,\nu)- O(1). \end{align*}
Combing all the estimates for $I- III$ we have

\begin{align*}
&\int I + II + III \, dV_k(x,\nu) - h\int \varphi \eta \circ f \, F(x,\nu)\, dV_k(x,\nu) \\
&\ge \int_{\{\eta < \eta_2\}} \varphi \left( \delta \,\eta\circ f - 2C_F \eta\circ f d(x,\nu) + \frac12 c_F \,\eta'\circ f d(x,\nu)^2\right) \, dV_k(x,\nu)- O(1). 
\end{align*}
Observe that $0\le \eta\circ f \le 2\eta_2$ on the set $\{ f< \eta_2\}$ and $\eta' =1$ on the set $\{ \eta > 0 \}$. Let us  consider the polynomial 
\[ 
p(\mu, t):= \delta\, \mu \, - 2C_F\, \mu t + \frac12 c_F \, t^2. 
\]
For a fixed $\mu\ge 0$ its minimum is obtained in $t_{\text{min.}}= \frac{2 C_F \mu}{ c_F}$ and takes the value
\[ 
p(\mu, t_{\text{min.}}) = \frac{\delta}{2}\, \mu - \frac{2 C_F^2 \,\mu^2}{c_F}. 
\]
Hence if $\mu \le 2\eta_2$ with $\eta_2>0$ sufficient small, $p(\mu,t)$ is non-negative i.e. for such a choice of $\eta_2$ we have
\begin{align*}
&\int I + II + III \, dV_k(x,\nu) - h\int \varphi \eta \circ f \, F(x,\nu)\, dV_k(x,\nu) \\
&\ge \int_{\{\eta < \eta_2\}} \varphi  \frac{\delta}{2} \,\eta\circ f  + \int_{\{\eta < \eta_2\}} \varphi p(\eta\circ f, d(x,\nu)) dV_k(x,\nu) - O(1) \\
&\ge \int_{\{\eta < \eta_2\}} \varphi  \frac{\delta}{2} \,\eta\circ f \, d\norm{V}_k(x) - O(1). 
\end{align*}
Since $B_{\frac{r}{2}} \cap \{ \eta \circ f < \eta_2 \}$ %$B_{\frac{r}{2}} \cap \{ 2\eta_2 > \eta\circ f > \frac{\eta_2}{2}\}$ 
is an open neighbourhood of $0$ and $0 \in Z$, we conclude that  
\[ \lim_{k \to \infty} \int_{\{\eta < \eta_2\}} \varphi  \frac{\delta}{2} \,\eta\circ f \, d\norm{V}_k(x) = +\infty, \]
contradicting the assumption \eqref{eq:curvature bounded by h} and proving the theorem. 
\end{proof}

\subsection{Consequences of Theorem \ref{thm:area-blowupset}}
By repeating the arguments of \cite{White2016}, we can now derive several properties of area blow-up sets (and more in general of \((m,h)\)-sets).

\begin{Proposition}\label{prop:closedness}
Let $\Omega\subset \R^{m+1}$ be open, $(F_k)_k$ be a sequence of anisotropic integrands, and $(Z_k)_k$ be a sequence of $(m, h_k)$-subset of $\Omega$ with respect to the integrand $F_k$. Suppose that $F_k$ converges uniformly on compact subsets of $\Omega$ to some integrand $F$, $Z_k$ converges in Hausdorff distance to a closed set $Z$ and $h_k \to h$, then $Z$ is an $(m,h)$-subset of $\Omega$ with respect to the integrand $F$. 
\end{Proposition}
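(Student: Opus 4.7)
The plan is to verify condition $(ii)$ of Proposition~\ref{prop:equivalence} for $Z$, by lifting any test-function/maximum configuration on $Z$ to nearby configurations on $Z_k$, applying the $(m,h_k)$-property for $F_k$ there, and then letting $k\to\infty$.

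Suppose $f\in C^2(\Omega)$ is such that $f|_Z$ has a local maximum at $p\in Z$ with $Df(p)\ne 0$. I would first replace $f$ on some ball $\overline{B_r(p)}\subset\Omega$ by
\[
\tilde f(x) := f(x)-\varepsilon|x-p|^2,
\]
so that $\tilde f|_Z$ has a \emph{strict} local maximum at $p$ inside that ball. The perturbation leaves $D\tilde f(p)=Df(p)$ unchanged and only shifts the Hessian by $-2\varepsilon\,\Id$, both harmless at the end.

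Next I would produce maxima of $\tilde f|_{Z_k}$ close to $p$. Hausdorff convergence yields a sequence $q_k\in Z_k$ with $q_k\to p$, so $Z_k\cap\overline{B_r(p)}\ne\emptyset$ for $k$ large; let $p_k$ be a point of maximum of $\tilde f$ on that compact set. Any cluster point of $(p_k)$ lies in $Z\cap\overline{B_r(p)}$ by Hausdorff convergence and realizes a value $\ge\tilde f(p)$, since $\tilde f(p_k)\ge \tilde f(q_k)\to\tilde f(p)$. Strict maximality forces the limit to be $p$, so $p_k\to p$; for $k$ large, $p_k$ lies in the open ball $B_r(p)$ and is an interior local maximum of $\tilde f|_{Z_k}$, with $D\tilde f(p_k)\ne 0$ by continuity. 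Condition $(ii)$ applied to $Z_k$ at $p_k$ with integrand $F_k$ and bound $h_k$ then reads
\[
(F_k)_{ij}\!\left(p_k,\tfrac{D\tilde f(p_k)}{|D\tilde f(p_k)|}\right)D_{ij}\tilde f(p_k) + \bigl(\partial_i(F_k)_i\bigr)\!\left(p_k,\tfrac{D\tilde f(p_k)}{|D\tilde f(p_k)|}\right)|D\tilde f(p_k)| \le h_k\,|D\tilde f(p_k)|.
\]
Passing to the limit in $k$, using $p_k\to p$, $h_k\to h$, and the convergence of $F_k$ together with its first and second derivatives in the $\nu$ variable (which is the natural reading of the convergence hypothesis in this $C^2$ category of integrands), yields $(ii)$ for $\tilde f$ at $p$ with $F$ and $h$. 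Sending finally $\varepsilon\to 0$ removes the $-2\varepsilon\,\trace F_{ij}(p,Df(p)/|Df(p)|)$ contribution from the Hessian and gives $(ii)$ for $f$ at $p$, as required.

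The delicate point is ensuring $p_k\to p$: without the strict-maximum trick, the $p_k$ could wander to another maximizer of $f|_Z$ inside $\overline{B_r(p)}$ or escape to $\partial B_r(p)$, and the desired limiting inequality at $p$ would not follow. The quadratic perturbation, combined with Hausdorff convergence of $Z_k$ to $Z$, is precisely what pins the $p_k$ down near $p$ so that the maximum-principle inequality for $F_k$ at $p_k$ can be propagated to one for $F$ at $p$.
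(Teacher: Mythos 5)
Your proof is correct and takes essentially the same approach as the paper: quadratically perturb to obtain a strict local maximum, use Hausdorff convergence to transfer maxima to nearby points $p_k\in Z_k$, apply the $(m,h_k)$-property for $F_k$ at $p_k$, and pass to the limit in $k$ and then in $\varepsilon$. The only cosmetic difference is that the paper verifies condition (ii)$'$ (the paraboloid form from Remark~\ref{Rem:1}), where the gradient direction is frozen at $a_1$ and the drift is absorbed into a $C\lvert p_k-p\rvert$ error term, whereas you verify condition (ii) directly and instead rely on the continuity of the $F$-derivatives in both arguments as $p_k\to p$ and $D\tilde f(p_k)\to Df(p)$.
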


\begin{proof} We will prove that the condition (ii)' in Remark \ref{Rem:1} holds. Let 
$$P(x)=a_0 + \langle a_1, x \rangle + \frac12 x^t A x \qquad \text{for some }  a_0 \in \R, a_1 \in \mathbb S^{m} \text{ and } A \in \R^{(m+1)\times (m+1)}$$
 be a paraboloid that realizes its maximum on $Z$ in $p\in \Omega$. Let $r >0$ such that $B_{r}(p)\subset \subset \Omega$. For any $\varepsilon >0$ and $k$ sufficient large, the map
 $$P_\varepsilon(x):=P(x)-\varepsilon \frac{\abs{x-p}^2}2$$
  realizes a strict local maximum on $Z_k\cap B_{r}(p)$ along a sequence of point $p_k \in Z_k\cap B_{r}(p)$, such that $p_k \to p$.

 Since $Z_k$ are $(m, h_k)$-subset of $\Omega$, we can apply the characterization (ii)' in Remark \ref{Rem:1} to $P_\varepsilon$ to deduce that 
\[ F_{ij}\left(p_k,a_1\right) (A_{ij}-\varepsilon \delta_{ij}) \le h_k  - (\partial_{i} F_i)(p_k,a_1) +C |p_k-p|.\]
Passing to the limit as $k\to \infty$ and \(\varepsilon \to 0\), we obtain
% deduce that
%\[ F_{ij}\left(p,a_1\right) (A_{ij}+\varepsilon \delta_{ij}) \le h - (\partial_{i} F_i)(p,a_1) - 2\langle D_1 F(p,a_1), a_1 \rangle, \]
%which in turn, taking the limit for $\varepsilon \to 0$, gives exactly the property (ii)'
\[ F_{ij}\left(p,a_1\right) A_{ij} \le h - (\partial_{i} F_i)(p,a_1).
\]
\end{proof}

\begin{Corollary}\label{cor:Z-blow-up}
Let $\Omega\subset \R^{m+1}$ be open and $Z\subset \Omega$ be an $(m,h)$-set with respect to the anisotropic integrand $F$. Consider a sequence $r_k \searrow 0$ and a point $p \in \Omega \cap Z$ such that
\[ Z_i:= \frac{ Z- p}{r_i} \to Z_\infty \qquad \mbox{in Hausdorff distance}. \]
Then $Z_\infty$ is an $(m,0)$-set of $\R^{m+1}$ with respect to the frozen integrand $F_p(\nu):=F(p, \nu)$.
\end{Corollary}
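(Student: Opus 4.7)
The plan is to realize each rescaled set $Z_i$ as an $(m,h_i)$-set with respect to a rescaled integrand $F^{(i)}$, with $h_i \to 0$ and $F^{(i)} \to F_p$, and then invoke Proposition \ref{prop:closedness}. Concretely, I set $\Phi_i(y) := p + r_i y$, $\Omega_i := \Phi_i^{-1}(\Omega)$, and define
\[
F^{(i)}(y, \nu) := F(p + r_i y, \nu), \qquad y \in \Omega_i, \ \nu \in \mathbb{S}^m.
\]
Each $F^{(i)}$ is an elliptic integrand on $\Omega_i$, and since $p$ is an interior point of $\Omega$, for every compact $K \subset \R^{m+1}$ one has $\Omega_i \supset K$ for all $i$ large enough; on such $K$, the $C^2$ convergence $F^{(i)} \to F_p$ is immediate from the $C^2$ regularity of $F$, because $p + r_i y \to p$ uniformly for $y \in K$.

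Next, I would check that $Z_i$ is an $(m, r_i h)$-set with respect to $F^{(i)}$ using characterization (ii) of Proposition \ref{prop:equivalence}. If $f \in C^2$ and $f|_{Z_i}$ has a local maximum at $q$ with $Df(q)\neq 0$, then $g(x) := f\bigl((x-p)/r_i\bigr)$ is a $C^2$ function whose restriction to $Z$ has a local maximum at $p + r_i q \in Z$ with $Dg(p+r_i q) = r_i^{-1} Df(q) \neq 0$. Applying condition (ii) for $Z$ (with respect to $F$ and $h$) at $p+r_i q$ and using
\[
\frac{Dg(p+r_i q)}{|Dg(p+r_i q)|} = \frac{Df(q)}{|Df(q)|}, \qquad D_{ij} g(p+r_i q) = r_i^{-2} D_{ij} f(q),
\]
and multiplying the resulting inequality by $r_i^2$, I obtain
\[
F_{ij}\bigl(p+r_i q, \tfrac{Df(q)}{|Df(q)|}\bigr) D_{ij}f(q) + r_i (\partial_i F_i)\bigl(p+r_i q, \tfrac{Df(q)}{|Df(q)|}\bigr) |Df(q)| \le r_i h\, |Df(q)|.
\]
Since by the chain rule $F^{(i)}_{ij}(y,\nu) = F_{ij}(p+r_i y, \nu)$ while $(\partial_i F^{(i)}_i)(y,\nu) = r_i (\partial_i F_i)(p+r_i y, \nu)$, this is exactly characterization (ii) for $Z_i$ with respect to $F^{(i)}$ and $h_i := r_i h$. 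Thus $Z_i$ is an $(m, r_i h)$-set with respect to $F^{(i)}$.

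Finally, since $Z_i \to Z_\infty$ in Hausdorff distance by hypothesis, $F^{(i)} \to F_p$ in $C^2_{\mathrm{loc}}$ on any fixed compact subset of $\R^{m+1}$, and $r_i h \to 0$, Proposition \ref{prop:closedness} applied on any relatively compact open $U \subset \R^{m+1}$ (for which $U \Subset \Omega_i$ eventually) yields that $Z_\infty \cap U$ satisfies condition (ii)' locally with constant $0$ with respect to $F_p$. Since this holds for every such $U$, $Z_\infty$ is an $(m,0)$-set in $\R^{m+1}$ with respect to $F_p$. The only mildly delicate point is that the ambient open set $\Omega_i$ varies with $i$, but because the defining condition is local and $\Omega_i$ exhausts $\R^{m+1}$, Proposition \ref{prop:closedness} applies on each compact subset of $\R^{m+1}$ without modification.
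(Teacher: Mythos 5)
Your proof is correct and follows essentially the same route as the paper's: the paper likewise asserts that $\frac{Z-q}{r}$ is an $(m,rh)$-set with respect to $F_{q,r}(x,\nu):=F(q+rx,\nu)$ and then invokes Proposition~\ref{prop:closedness}, leaving the rescaling computation as ``straightforward to check''. You have simply carried out that check explicitly via characterization~(ii), and handled the minor point about the varying ambient domains by working locally on compact subsets; both details are correct and match the intended argument.
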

\begin{proof}
It is straight forward to check that for every $r>0$ and $q \in \Omega$ 
\[ \frac{Z-q}{r} \]
is an $(m, rh)$-set with respect to the integrand
\[ F_{q,r}(x,\nu):= F(q+rx, \nu). \]
By Proposition \ref{prop:closedness}, $Z_\infty$ is an $(m,0)$-subset of the integrand
\[ F_p(\nu)= \lim_{k \to \infty} F_{p, r_k}(x, \nu). \]
\end{proof}

A further consequence of Theorem \ref{thm:area-blowupset} is a constancy property, compare with \cite[Section 4]{White2016}:
\begin{Proposition}
Let $\Omega \subset \R^{m+1}$ be open and $Z$ be an $(m,h)$-subset of $\Omega$ with respect to an anisotropic integrand $F$. Suppose $Z$ is a subset of a connected, $m$-dimensional, properly embedded $C^1$-submanifold $M$ of $\Omega$. Then
\[ \text{either} \quad Z =\emptyset \quad \text{ or } \quad Z = M. \]
\end{Proposition}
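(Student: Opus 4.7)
The plan is to prove that $Z$ is both open and closed in $M$; connectedness of $M$ then forces $Z=\emptyset$ or $Z=M$. Closedness in $M$ is immediate because $Z$ is relatively closed in $\Omega$. The real work is openness, which I will attack by contradiction: assume $p\in Z$ does not lie in the $M$-interior of $Z$, so there are $q_k\in M\setminus Z$ with $q_k\to p$, and manufacture a $C^2$ test function whose restriction to $Z$ has a local maximum near $p$ but which violates condition $(ii)$ of Proposition~\ref{prop:equivalence}.

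In local coordinates around $p=0$ with $T_pM=\R^m\times\{0\}$, write $M$ as the graph $x_{m+1}=\phi(x')$ of a $C^1$ function with $\phi(0)=0$, $D\phi(0)=0$, set $q_k=(q_k',\phi(q_k'))$, $r_k=|q_k'|$, and let $L_k$ denote the Lipschitz constant of $\phi$ on a ball around $0$ containing $q_k'$; the hypothesis $D\phi(0)=0$ gives $L_k\to 0$. For a constant $C$ chosen large enough in terms of the ellipticity bounds of $F$ (specifically so that $2C\lambda-2m\Lambda\ge 1$), I would consider
\[
f_k(x):=-|x'-q_k'|^2+C(x_{m+1}-\phi(q_k'))^2.
\]

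The key checks are: (a) on $M$, $f_k(x',\phi(x'))\le(CL_k^2-1)|x'-q_k'|^2$, so once $CL_k^2<1/2$ this is strictly negative for $x'\ne q_k'$ and equals $0$ at $q_k'$; since $q_k\notin Z$, the maximizer $z_k=(z_k^*,\phi(z_k^*))$ of $f_k$ on the compact set $Z\cap\overline{B_\delta(q_k)}$ satisfies $f_k(z_k)<0$; (b) comparing with $f_k(0)\ge -r_k^2$ yields $|z_k^*-q_k'|\le\sqrt{2}\,r_k$, hence $z_k\to p$ and $z_k$ is an honest local maximum of $f_k|_Z$; (c) $Df_k(z_k)\ne 0$ because $z_k^*\ne q_k'$, and its $e_{m+1}$-component is bounded by $CL_k$ times its tangential component, so the direction $\nu_k:=Df_k(z_k)/|Df_k(z_k)|$ converges along a subsequence to a tangential unit vector $\nu_\infty\in T_pM$. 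The Hessian of $f_k$ being diagonal gives
\[
F_{ij}(z_k,\nu_k)D_{ij}f_k(z_k)=-2\sum_{i=1}^m F_{ii}(z_k,\nu_k)+2C\,F_{m+1,m+1}(z_k,\nu_k),
\]
and by uniform ellipticity $F_{m+1,m+1}(0,\nu_\infty)\ge\lambda$ (because $e_{m+1}\perp\nu_\infty$) while $\sum_i F_{ii}(0,\nu_\infty)\le m\Lambda$, so the choice of $C$ forces this expression to be $\ge 1/2$ for $k$ large. Meanwhile $|Df_k(z_k)|=O(r_k)\to 0$, hence $h|Df_k(z_k)|+|(\partial_iF_i)(z_k,\nu_k)|\,|Df_k(z_k)|\to 0$, contradicting condition $(ii)$. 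Therefore $\partial_M Z=\emptyset$, $Z$ is open in $M$, and the result follows.

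The main obstacle I anticipate is the low regularity of $M$: since $M$ is only $C^1$, no quadratic graph bound $|\phi(x')|\le C|x'|^2$ is available, and a paraboloid centered at $p$ itself cannot be guaranteed to dominate $\phi$ on $M$. The fix is precisely to center the paraboloid at a bad point $q_k\in M\setminus Z$ (not at $p$) and to cash in on the decay of the local Lipschitz constant $L_k$ of $\phi$ near $q_k'$, which is the first-order information granted by the $C^1$ hypothesis. A secondary, purely technical, point is the strict positivity of $F_{m+1,m+1}(0,\nu_\infty)$ for $\nu_\infty\perp e_{m+1}$, for which I would invoke the uniform ellipticity of $F$ that is implicitly in force throughout the paper.
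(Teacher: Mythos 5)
Your approach is correct but genuinely different from the paper's. The paper blows up around a touching point $p$ of $Z$ with a ball in $M\setminus Z$, invokes Corollary~\ref{cor:Z-blow-up} to see that the blow-up limit $Z_\infty$ is an $(m,0)$-set for the frozen integrand $F_p$, notes that $Z_\infty$ lies in a half-plane $H$ of $T_pM$, and then tests with the single fixed paraboloid $f(x)=-x_1+x_1^2+x_{m+1}^2$ whose gradient at the origin points out of $H$; since the integrand is frozen, the $\partial_iF_i$ term vanishes and the contradiction to property~(ii) is immediate from $D^2_2F(p,e_1)(e_{m+1},e_{m+1})>0$. You instead stay at the original scale and kill the $h\,|Df|$ and $(\partial_iF_i)|Df|$ terms by sending the gradient of the test function to zero along the sequence $q_k\to p$, which is what the blow-up achieves implicitly. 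Your route is self-contained and avoids Proposition~\ref{prop:closedness} and Corollary~\ref{cor:Z-blow-up}, at the cost of carrying the $x$-dependence and the lower-order terms explicitly; the paper's route is conceptually tidier because all lower-order terms disappear after rescaling.

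One small imprecision should be fixed. As written you ``let $L_k$ denote the Lipschitz constant of $\phi$ on a ball around $0$ containing $q_k'$'' and then use $L_k$ in step~(a) to bound $f_k$ on all of $M\cap\overline{B_\delta(q_k)}$; but the maximizer $z_k^*$ is a priori only known to lie in $B_\delta(q_k')$, which is a ball of \emph{fixed} radius $\delta$, so the Lipschitz constant you may invoke there is $L_\delta:=\sup_{B_{2\delta}(0)}|D\phi|$, which is small once $\delta$ is chosen small but does not tend to $0$ with $k$. The clean version is a two-scale argument: first fix $\delta$ so small that $CL_\delta^2<\tfrac12$, which gives $f_k\le -\tfrac12|x'-q_k'|^2$ on $M\cap B_\delta(q_k)$ and hence (comparing with $f_k(0)\ge -r_k^2$) the localization $|z_k^*-q_k'|\le\sqrt2\,r_k$; \emph{then}, knowing $z_k^*$ and $q_k'$ both lie in $B_{(1+\sqrt2)r_k}(0)$, use the refined bound $|\phi(z_k^*)-\phi(q_k')|\le \tilde L_k|z_k^*-q_k'|$ with $\tilde L_k:=\sup_{B_{(1+\sqrt2)r_k}(0)}|D\phi|\to0$ to conclude in step~(c) that the $e_{m+1}$-component of $\nu_k$ tends to zero. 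With that rearrangement your argument goes through. Also note that you do not really need a global uniform ellipticity constant $\lambda$: since the possible limit directions $\nu_\infty$ range over the compact set $\mathbb S^{m-1}\subset T_pM$ and $e_{m+1}\perp\nu$ for all such $\nu$, strict ellipticity plus continuity already give $\inf_{\nu\in\mathbb S^{m-1}}F_{m+1,m+1}(p,\nu)>0$, which is all that is required to fix $C$ before running the contradiction.
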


\begin{proof} 
If $Z =\emptyset$ there is nothing to prove. Assume that $Z\neq \emptyset$ and suppose by contradiction that $Z\neq M$. Since $Z$ is closed, there exists $B_r(q) \subset \Omega \setminus Z$ with $q \in M$ and $p \in Z \cap \overline{B_r(q)}$. For a sequence of positive numbers $\lambda_k \searrow 0 $ consider 
\[ Z_k := \frac{ Z - p}{\lambda_k} \quad \text{ and } \quad M_k:= \frac{ M - p}{\lambda_k}. \]
Due to the regularity of $M$, we have that $M_k \setminus B_{\frac{r}{\lambda_k}}(\frac{ q - p}{\lambda_k})$ converges in Hausdorff distance to a half plane $H$ of $T_pM$. Hence, passing to a subsequence, $Z_k \to Z_\infty$ in Hausdorff distance, with $Z_\infty \subset H$ and $0 \in Z_\infty$. 
After a rotation $O$, we may assume that $H=\{ x \in \R^{m+1} \colon x_{m+1} =0, x_1\ge 0 \}$. By corollary \ref{cor:Z-blow-up} we have that $Z_\infty$ is an $(m,0)$-subset of $\R^{m+1}$ with respect to the frozen integrand $\hat{F}(\nu) := F(p,O\nu)$.
Now consider the function 
$$f(x):= - x_1 + x_1^2 + x_{m+1}^2.$$
 Observe that $f$ takes a strict local maximum at $0$ on $H$, hence $f|_{Z_\infty}$ has a strict local maximum in $0$, but this contradicts the characterization (ii) of Proposition \ref{prop:equivalence},  since 
\[ D^2\hat{F}(e_1) (e_1 \otimes e_1 + e_{m+1} \otimes e_{m+1} ) >0. \]
\end{proof}

For the sake of completeness we prove also the anisotropic counterpart of the ``classical'' constancy theorem for varifolds. The reader may compare it with \cite[Theorem 8.4.1]{Simon} for the proof in the isotropic setting.
\begin{Proposition}\label{prop:constancy classical} Given $V\in \mathbb V_m(\Omega)$ wich is stationary with respect to an anisotropic integrand $F$. Let $\spt(V) \subset M$, where $M$ is a connected $M$-dimensional $C^2$ submanifold of $\Omega$, then $V= \theta_0 \, \mathcal{H}^m\res M \otimes \delta_{T_xM}$.
\end{Proposition}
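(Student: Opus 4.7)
I would follow the classical two-step strategy: first show that the Grassmannian part $\mu_x$ of $V$ is the Dirac mass at $T_xM$, so that $V=\theta\,\H^m\res M\otimes\delta_{T_xM}$ for some density $\theta$; then show that $\theta$ is locally constant via a tangential integration by parts on $M$, and conclude by connectedness.

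For the first step, fix $x_0\in M\cap\spt\|V\|$ and use the signed distance $\rho(x)=\pm\dist(x,M)$, which is $C^2$ in a tubular neighborhood of $M$ with $\nabla\rho|_M=\nu_M$. I would test $\delta_F V(g)=0$ against vector fields of the form
\[
g(x)=\eta(x)\,\chi(\rho(x))\,v,\qquad \eta\in C^1_c,\ v\in\R^{m+1}\text{ constant},\ \chi\in C^1_c(\R),\ \chi(0)=0,\ \chi'(0)=1.
\]
Since $g|_M=0$ and $Dg|_M=\eta\,v\otimes\nu_M$, the first variation formula \eqref{eq:firstvariation} reduces to $\int\eta(x)\,\langle v,B_F(x,\nu)\nu_M(x)\rangle\,d\|V\|(x)\,d\mu_x(\nu)=0$, which by arbitrariness of $\eta$ and $v$ yields $\int B_F(x,\nu)\nu_M\,d\mu_x(\nu)=0$ for $\|V\|$-a.e.\ $x$. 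The key choice is then $v=D_2F(x,\nu_M(x))$: by Euler's identity \eqref{eulercod1} the integrand becomes
\[
F(x,\nu)F(x,\nu_M)-\langle D_2F(x,\nu),\nu_M\rangle\,\langle D_2F(x,\nu_M),\nu\rangle,
\]
which is non-negative thanks to the duality inequality $|\langle D_2F(x,\xi),\eta\rangle|\le F(x,\eta)$ (a direct consequence of convexity and one-homogeneity, applied symmetrically), with equality only at $\nu=\pm\nu_M$ by strict convexity of $F$. Hence $\mu_x$ is supported at $\pm\nu_M$, and evenness plus normalization give $\mu_x=\tfrac12(\delta_{\nu_M}+\delta_{-\nu_M})=\delta_{T_xM}$.

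For the second step, with $V=\theta\,\H^m\res M\otimes\delta_{T_xM}$ the stationarity reads
\[
\int_M\theta\bigl[\langle D_1F(x,\nu_M),g\rangle+B_F(x,\nu_M):Dg\bigr]\,d\H^m=0.
\]
Using the algebraic identity \eqref{e:Bg} together with the expression \eqref{e:HH} for $H_F(x,M)$, this rewrites as
\[
\int_M\theta\bigl[\Div_M(B^*g)-F(x,\nu_M)\,\langle H_F(x,M),g\rangle\bigr]\,d\H^m=0.
\]
Since $B^*g$ is tangent to $M$ by \eqref{e:rangeB}, a distributional integration by parts on $M$ yields the pointwise vector equation $B(x,\nu_M)\nabla_M\theta+F(x,\nu_M)\,\theta\,H_F(x,M)=0$ on $M$. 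As $H_F\parallel\nu_M$ by \eqref{eq:H_M} and $B\nabla_M\theta$ has tangential component equal to $F\nabla_M\theta$ (the normal contribution $-\langle D_2F,\nabla_M\theta\rangle\nu_M$ only pairs against $H_F$), projecting onto $T_xM$ forces $F\nabla_M\theta=0$, hence $\nabla_M\theta=0$ in $\mathcal D'(M)$. Connectedness of $M$ then gives $\theta\equiv\theta_0$ constant.

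The main difficulty is the first step: the choice of a test field adapted to the normal of $M$ (via the signed distance) combined with the sharp duality inequality for convex one-homogeneous $F$ that pins down the support of $\mu_x$ at $\{\pm\nu_M\}$. The second step is then routine integration by parts on $M$, automatically decoupling the tangential equation (yielding constancy of $\theta$) from the normal one (yielding $F$-minimality of $M$ on $\spt\|V\|$ as a consistency check).
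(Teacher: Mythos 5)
Your first step is correct and in spirit the same as the paper's: pair stationarity against the Cahn--Hoffman direction $D_2F(x,\nu_M)$ and use the Fenchel-type inequality $|\langle D_2F(x,\xi),\eta\rangle|\le F(x,\eta)$, which is strict unless $\eta\parallel\xi$ (by the strict convexity of $F$ on the sphere). You reach this directly via the signed distance to $M$, whereas the paper first straightens $M$ to a plane by a local $C^2$ diffeomorphism and then tests with $X=\varphi\,\eta\,f\,D_2F(x,e_{m+1})$; your route is marginally cleaner, but the algebraic heart is identical.

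Your second step has a genuine gap: you write $V=\theta\,\H^m\res M\otimes\delta_{T_xM}$ ``for some density $\theta$'' and then integrate the first variation by parts against $\theta$. Step 1 gives $\spt\|V\|\subset M$ and $\mu_x=\delta_{T_xM}$ for $\|V\|$-a.e.\ $x$, but it does \emph{not} give $\|V\|\ll\H^m\res M$; a priori $\|V\|$ could be concentrated on a lower-dimensional subset of $M$, in which case there is no density $\theta$ and your integration by parts against $\theta$ does not apply. Establishing absolute continuity (and its constancy) is a genuine part of what must be proved. The paper obtains it cheaply after flattening: testing with tangential translations $g=\varphi\,\eta\,e_i$, $i\le m$, the first variation reduces to $\int \partial_i\bigl(F(x,e_{m+1})\varphi\bigr)\,d\|V\|=0$; since $\{F\varphi:\varphi\in C^1_c\}$ exhausts $C^1_c$, this says $\partial_i\|V\|=0$ distributionally for every tangential $i$, so $\|V\|$ is a constant multiple of $\mathcal L^m$ on the flattened plane. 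The analogous move is available in your intrinsic picture --- take $g=\varphi v$ for constant $v$, note that $B^*(\cdot,\nu_M)v$ spans $T_xM$ as $v$ varies, and read off that the tangential distributional derivatives of $\|V\|$ are measures absolutely continuous with respect to $\|V\|$, from which the regularity follows --- but it has to be carried out before you may speak of $\theta$. Once that is in place, your tangential/normal decoupling (tangential projection gives $\nabla_M\theta=0$, normal projection gives $\theta H_F=0$) is correct and reproduces the paper's conclusion.
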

\begin{proof}
The strategy of the proof is similar to the one for the area functional, compare \cite[Theorem 8.4.]{Simon}. To simplify the presentation, we divide the proof in two steps:
\begin{itemize}
\item[Step 1)] if $M$ is a plane, i.e. $M = \{ x_{m+1} = 0 \}$, and $\Omega = B_{2r}(0)$, then the conclusion of the proposition holds on $B_r(0)$.
\item[Step 2)] we reduce the general case to the case in Step 1.
\end{itemize}
\emph{Proof of Step 1:} We will write $x=(y, z)\in \R^m \times \R$ for the coordinates in $\R^{m+1}$ i.e. $M = \{ z=0\}$. Consider the vectorfield 
\[ X(x):= \varphi(y) \eta(z) f(z) D_2 F(x, e_{m+1}) \]
where $\varphi \in C^1_c(B_r^m(0))$, $f, \eta \in C^1(\R)$ satisfying $f(0)=0, f'(0) \neq 0$ and $\eta$ non-negative with $\eta(z)=0$ for $\abs{z} > r$ and $\eta(z)=1$ for $\abs{z} < \frac{r}{2}$. \\
Since $\spt(V) \subset M$, $f =0 $ on $M$, $\eta=1$ on $M$ and $\eta'=0$ on $M$, the first variation formula (see \cite[section 5]{DePhilippisDeRosaGhiraldin}) reduces to 
\[ 0 = \delta_FV(X) = \int B_F(x, \nu) : (\varphi(y) f'(0) D_2 F(x, e_{m+1}) \otimes e_{m+1}) \, dV(x, \nu). \]
Since $f'(0) \neq 0$, the previous equation implies that
 \[B_F(x, \nu) : D_2 F(x, e_{m+1}) \otimes e_{m+1} 
 % =  G(x,\nu) G(x, e_{m+1}) - D_\nu G(x,\nu)\cdot e_{m+1} D_\nu G (x, e_{m+1}) \cdot \nu 
 =0, \qquad \text{for $V$-a.e. }(x,\nu),\] which, by strict convexity of $F$, is only possible when $\nu = \pm e_{m+1}$ for all $x \in B_r(0)\cap \spt(V)$. This shows that the tangent space of $V$ agrees with the tangent space of $M$, that is
 $$V=\|V\|\otimes \left(\frac 12\delta_{e_{m+1}} + \frac 12 \delta_{-e_{m+1}}\right).$$
 Furthermore, we consider the vectorfield 
\[ X(x):= \varphi(y)\eta (z) e_i, \qquad \text{ for every} 1\le i \le m. \]
Since $\eta = 1 $ on $M$ and $B_F(x, \nu)$ is even in the second variable, the first variation formula reads
\begin{align*}
 0 = \delta_F V(X) &= \int B_F(x, e_{m+1}) :  (e_i \otimes D \varphi)  + \partial_i F(x,e_{m+1}) \varphi \, d\|V\|(x) \\
 &= \int F(x, e_{m+1}) \partial_i\varphi + \partial_i F(x,e_{m+1}) \varphi \, d\norm{V} (x)\\&= \int \partial_i (F(x,e_{m+1}) \varphi) \, d\norm{V}(x).
\end{align*}
Hence $\norm{V}$ is constant on $M\cap B_r(0)$. This concludes the proof of step 1. \\

\emph{Proof of Step 2:} Fix any $p \in M\cap \spt(V)$ and $0< r< \dist(x, \partial \Omega)$  such that the following holds: there is a $C^2$ function $\Phi: B_{2r}(p) \to B_{2r}(0)\subset \R^{m+1}$ with $\Phi(M\cap B_{2r}(0)) = \{ x_{m+1} = 0 \}\cap B_{2r}(p)$.  We replace $V$, $M$ and $F$ in $\Omega$ respectively with $V':=\Phi^\# V$, $M':=\Phi(M)$ and $F':={\Phi^{-1}}^\# F$ in $B_{2r}(0)$. By construction $V', M', F'$ are all as in Step 1. Hence we deduce that in $B_r(0)$ $$V' = \theta_0  \, \mathcal{H}^m\res M'\otimes \left(\frac 12\delta_{\nu_x} + \frac 12 \delta_{-\nu_x}\right), \quad \text{where $\nu_x$ is the normal vectorfield to $M$}.$$ But this implies that $V = \theta_0  \, \mathcal{H}^m\res M\otimes \left(\frac 12\delta_{\nu_x} + \frac 12 \delta_{-\nu_x}\right)$ in $B_r(p)$ and the proposition follows. 
\end{proof}

\section{Boundary curvature estimates}\label{sec:bound}
In this section we prove the following theorem which easily implies Theorem \ref{thm:boundaryintro}. Recall that a set \(\Omega\) is strictly \(F\)-convex in  \(B_{R}\) if
\[
H_F(x,\partial \Omega)\ge c>0\qquad\text{for all \(x\in \Omega\cap B_R\)}.
\]
It easily follows by \eqref{calcolo} that a uniformly convex set is strictly \(F\)-convex in in sufficiently small balls.

\begin{Theorem}\label{thm:curvature estimates at the boundary} Let $\Omega \subset \R^{3}$ s.t. $\partial \Omega \cap \overline{B_{2R}}$ is $C^{3}$ and \(\Omega\) is strictly \(F\)-convex in  \(B_{2R}\).  Let $\Gamma$ be a $C^{2,\alpha}$ embedded curve in $\partial \Omega \cap B_{2R}$  with $\partial \Gamma \cap  B_{2R} = \emptyset$. Furthermore let $M$ be an $F$-stable, $C^2$ regular surface in $\Omega$ such that $\partial M \cap B_R = \Gamma$. Then there exists a constant $C>0$ and a radius $r_1>0$ depending only on $F, \Omega, \Gamma$  such that 
\begin{equation*}
 \sup_{\substack{p \in B_{\frac{R}{2}} \cap \Omega \\ \dist(p, \Gamma)< r_1}} r_1 |A(p)| \le C.
\end{equation*}
Moreover the constants \(C\) and \(r_1\)  are uniform as long as \(\Omega\), \(\Gamma \) and \(F\) vary in compact classes\footnote{ For a family of curves  \(\Gamma_{\alpha}\) this amounts also in asking that all the considered curves should be ``uniformly'' embedded:
\[
\inf_{\alpha} \inf_{\substack{x\ne y\\ \,x,y\in \Gamma_\alpha}}\frac{ \dist_{\Gamma}(x,y)}{|x-y|}>0.
\]
}.
\end{Theorem}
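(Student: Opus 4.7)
My plan is to reduce, as the introduction suggests, the boundary curvature bound to a uniform bound on the area density ratios \(\H^2(M\cap B_\rho(x))/\rho^2\) in an \(r_1\)-neighborhood of \(\Gamma\), and then invoke the anisotropic analogue of White's boundary regularity theorem from \cite{White1987}, whose hypotheses are fulfilled by uniform ellipticity of \(F\). Away from \(\Gamma\), \(F\)-stability combined with a test-vector-field computation yields a local \(L^2\) bound on \(|A_M|\), and the extended monotonicity formula for \(2\)-varifolds with curvature in \(L^2\) (as in \cite{Simon}) then delivers the density bound, with constants uniform in the compact classes of data.

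To obtain the density bound near \(\Gamma\), I argue by contradiction. Suppose it fails: there exist sequences \((F_k,\Omega_k,\Gamma_k,M_k)\) with \(F_k\)-stable \(M_k\), points \(q_k\in B_R\cap\Omega_k\) with \(\dist(q_k,\Gamma_k)\to 0\), and radii \(\rho_k\to 0\) such that \(\H^2(M_k\cap B_{\rho_k}(q_k))/\rho_k^2\to\infty\). Letting \(\bar q_k\in\Gamma_k\) be nearest to \(q_k\), I rescale by setting
\[
\tilde M_k:=\rho_k^{-1}(M_k-\bar q_k),\quad \tilde\Omega_k:=\rho_k^{-1}(\Omega_k-\bar q_k),\quad \tilde\Gamma_k:=\rho_k^{-1}(\Gamma_k-\bar q_k),
\]
and \(\tilde F_k(x,\nu):=F_k(\bar q_k+\rho_k x,\nu)\). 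Up to subsequences and a rotation, \(\tilde F_k\to F_*\) locally uniformly to a frozen uniformly elliptic integrand, \(\tilde\Omega_k\to\overline H\) in Hausdorff distance (with \(H\) a closed half-space), \(\tilde\Gamma_k\to\ell\subset\partial H\), and \(\H^1(\tilde\Gamma_k\cap K)\) stays uniformly bounded. Stability yields stationarity, so the rescaled sequence satisfies the hypothesis of Theorem \ref{thm:area-blowupset} with \(h=0\); combined with Proposition \ref{prop:closedness} to handle the varying integrands, the area blow-up set \(Z\) is a nonempty \((2,0)\)-set with respect to \(F_*\), contained in \(\overline H\), with \(0\in Z\).

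The final step is to apply the maximum principle Proposition \ref{prop:equivalence}(iii) and derive a contradiction: the strict \(F\)-convexity of \(\partial\Omega\) (together with the constraint \(\partial\tilde M_k\subset\tilde\Gamma_k\to\ell\)) has to be exploited to confine \(Z\) to a proper closed wedge \(W\subset\overline H\) with edge \(\ell\). One can then touch \(Z\) at \(0\) by the complement of a small strictly \(F_*\)-convex region tangent to \(W\) at \(0\); condition (iii) forces the inward \(F_*\)-mean curvature of such a barrier to be nonpositive at \(0\), which contradicts uniform ellipticity. Hence \(Z=\emptyset\), contradicting \(0\in Z\); this closes the contradiction, establishes the density bound near \(\Gamma\), and White's boundary curvature estimates then complete the proof.

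The main technical obstacle is the confinement of \(Z\) to the wedge \(W\): a naive linear rescaling flattens both \(\partial\Omega\) and \(\Gamma\) into a half-space and a line, washing out the strict \(F\)-convexity that one would like to use as a barrier. One must therefore either perform the blow-up at a scale adapted to the curvature of \(\partial\Omega\) so that the strict \(F\)-convexity persists in the limit as a genuinely curved obstacle, or argue first at finite scale using \(\partial\Omega\) itself, via condition (iii) applied to \(N=\overline{\Omega}\), to show that any area blow-up set of \((M_k)\) meeting \(\partial\Omega\setminus\Gamma\) is empty, and only then rescale at points of \(\Gamma\). Once the wedge structure is secured, the maximum-principle conclusion is a one-line ellipticity computation.
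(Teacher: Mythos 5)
Your outline has the right skeleton: density bounds near $\Gamma$ by a blow-up/contradiction argument via the $(m,h)$-set machinery, density bounds in the interior via stability and the extended monotonicity formula, and a final appeal to White's boundary curvature estimate (the paper uses \cite[Theorem 5.2]{White1991}, not \cite{White1987}, but that is cosmetic). You also correctly pinpoint the real obstacle: a naive linear rescaling flattens $\partial\Omega$ and $\Gamma$ into a half-space and a line, and the strict $F$-convexity that one hoped to use is lost in the limit. However, neither of the two remedies you sketch actually closes this gap. Rescaling ``at a scale adapted to the curvature of $\partial\Omega$'' is not an option, because the rescaling scale $\rho_k$ is forced by the contradiction hypothesis and must go to zero. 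And applying condition (iii) with $N=\overline\Omega$ at finite scale only rules out the blow-up set touching $\partial\Omega$ away from $\Gamma$; it says nothing that would confine the rescaled blow-up set to a proper wedge with edge $\ell$ rather than, say, spreading over a full half-plane of $\partial H$. The wedge confinement genuinely needs a quantitative slope gap transverse to $\ell$, and nothing in your argument produces one.

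The paper's resolution is a different mechanism that you do not mention: before rescaling, one projects $\Gamma_k$ to a curve $\gamma_k$ in the tangent plane, takes the two balls $B_k^\pm$ tangent to $\gamma_k$ at $x_k$ from either side, and solves the anisotropic non-parametric Euler--Lagrange problem $L u_k^\pm = \tfrac12 h_{\min}$ on $B_k^\pm$ with boundary data given by the graph of $\partial\Omega$. Since $M_k$ is $F$-stationary (so $L f_k=0$ in local graph coordinates), a continuity argument with the strong maximum principle shows $M_k$ never touches the graphs of $u^\pm_{k,s}$ for $s\ge\tfrac12 h_{\min}$; and the Hopf boundary point lemma gives a uniform gap $c_H>0$ between the normal derivatives of $u_k^\pm$ and of $\Phi$ at $x_k$. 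It is exactly this Hopf slope gap that survives the blow-up as two linear barriers with slopes $\pm c_H$, confining $Z$ to a wedge. (The final contradiction is then obtained from the viscosity formulation in Proposition \ref{prop:equivalence}(ii) via an explicitly chosen quadratic test function, rather than from (iii).) Without this barrier construction — which is the heart of Proposition \ref{prop:no-boundary blow up} — the ``one-line ellipticity computation'' you invoke at the end has nothing to act on.
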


We start with the following simple lemma.

\begin{Lemma}\label{lem:nonemptyblowup}
 Let $\{\mu_j\}_{j\in \N}\subset \mathcal M_+(\R^{m+1})$ be a sequence of Radon measures such that
 $$\lim_{j\to \infty} \mu_j(\overline{B}_1)= + \infty.$$
   Then the ``area-blow up set''
\[ Z:= \{x \in \R^{m+1} \colon \limsup_{j \to \infty} \mu_j(B_r(x)) = + \infty \text{ for every } r >0 \} \]
satisfies $Z \cap \overline{B}_1 \neq  \emptyset $. \end{Lemma}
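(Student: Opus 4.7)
The plan is to argue by contradiction using compactness of $\overline{B}_1$. Suppose $Z\cap \overline{B}_1=\emptyset$. Then for every $x\in \overline{B}_1$ the point $x$ fails to be in $Z$, so there exists some radius $r_x>0$ such that
\[
\limsup_{j\to\infty}\mu_j(B_{r_x}(x))<+\infty.
\]
The open balls $\{B_{r_x}(x)\}_{x\in\overline{B}_1}$ form an open cover of the compact set $\overline{B}_1$, hence by Heine--Borel I can extract a finite subcover $B_{r_{x_1}}(x_1),\dots,B_{r_{x_N}}(x_N)$.

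Set $C_i:=\limsup_{j\to\infty}\mu_j(B_{r_{x_i}}(x_i))<+\infty$. Fix any $\varepsilon>0$; for each $i$ there exists $J_i$ such that $\mu_j(B_{r_{x_i}}(x_i))\le C_i+\varepsilon$ for all $j\ge J_i$. Setting $J:=\max_i J_i$, for every $j\ge J$ I can use subadditivity of $\mu_j$ on the finite subcover to get
\[
\mu_j(\overline{B}_1)\le \sum_{i=1}^N \mu_j(B_{r_{x_i}}(x_i))\le \sum_{i=1}^N(C_i+\varepsilon)<+\infty,
\]
which contradicts the hypothesis $\mu_j(\overline{B}_1)\to+\infty$. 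Hence $Z\cap \overline{B}_1\neq \emptyset$.

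There is no real obstacle here: the lemma is a clean exercise in finite-subcover extraction combined with the elementary bound $\limsup_j(a_j^{(1)}+\dots+a_j^{(N)})\le \sum_i \limsup_j a_j^{(i)}$ valid for finitely many sequences. The only point requiring a moment of care is to use a \emph{finite} cover, so that passing to $\limsup$ commutes with the sum; this is exactly where compactness of $\overline{B}_1$ is crucial.
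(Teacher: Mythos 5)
Your proof is correct, and it takes a genuinely different route from the one in the paper. The paper argues directly: it builds a nested sequence of dyadic cubes $C_0 \supset C_1 \supset \cdots$ with side lengths halving at each step, choosing at stage $i+1$ one of the $2^{m+1}$ sub-cubes of $C_i$ on which $\limsup_j \mu_j$ is still infinite (such a sub-cube must exist by finite subadditivity), and then takes the point in $\bigcap_i C_i$ as the desired element of $Z$. Your argument instead proceeds by contradiction: assuming $Z \cap \overline{B}_1 = \emptyset$, every $x \in \overline{B}_1$ has a ball $B_{r_x}(x)$ with $\limsup_j \mu_j(B_{r_x}(x)) < \infty$, and compactness of $\overline{B}_1$ lets you pass to a finite subcover, on which finite subadditivity of each $\mu_j$ and the finite-sum limsup inequality force $\limsup_j \mu_j(\overline{B}_1) < \infty$, contradicting the hypothesis. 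Both are elementary and appeal to essentially the same underlying compactness phenomenon (the paper's nested-cube construction is itself a proof-template for Heine--Borel); the paper's version is constructive in the sense that it exhibits a point of $Z$, while yours is shorter and cleaner but purely existential. There is no gap in your argument.
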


\begin{proof} Up to consider as new sequence of measures $\mu_j|\overline{B_1}$, we can assume that $\spt(\mu_j) \subset \overline{B}_1$. We claim that there exists a sequence of cubes $\{C_i\}_{i\in \N}$ with side length $l_i$ such that 
\begin{itemize}
\item[(i)] $C_{i+1} \subset C_i$ for all $i \in \N$;
\item[(ii)] $l_i=2^{1-i}$;
\item[(iii)]  $\limsup_{j \to \infty} \mu_j(C_i)=+\infty$ for all $i \in \N$.
\end{itemize}
%
%
%
%
%if there exists a cube $C_0$ with side length $l$ such that 
%$$\limsup_{j \to \infty} \mu_j(C_0) = + \infty,$$
%then there exists a sequence of dyadic subcubes $(C_i)_i$ 
%$ \quad \text{and} \quad \limsup_{j \to \infty} \mu_j(C_i) = + \infty \quad \forall .$$
We will prove this claim by induction on $i$. We remark that $C_0$ exists: it is enough to consider a cube containing $\overline{B}_1$, for instance $\overline{B}_1 \subset [-1,1]^m=:C_0$, so that we have 
\[\limsup_{j \to \infty} \mu_j(C_0) = + \infty. \]
%\emph{Inductive step:} Let $\nu_j$ be a sequence of Radon measures and $C$ closed dyadic cube in $\R^m$, with $\limsup_{j \to \infty} \nu_j(C) = + \infty$, then there exists a closed dyadic subcube $C' \subset C$ of half side length satisfying $\limsup_{j \to \infty} \nu_j(C') = + \infty$\\
\emph{Proof of the Inductive step:}
Let $\mathcal{C}$ the collection of the dyadic cubes that are obtained by dividing $C_i$ into $2^{m+1}$ sub-cubes with half side length. Suppose 
$$\limsup_{j\to \infty} \mu_j(C')< \infty  \qquad \forall C'\in \mathcal{C}.$$
 Since there are only $2^{m+1}$ of these cubes, there exists $j_0 \in \N$ and $K>0$ such that 
\[ \mu_j(C') \le K \qquad \forall j \ge j_0, \quad \forall C' \in \mathcal{C}. \]
But this contradicts the assumption, since
\[ \mu_j(C_i) \le \sum_{C' \in \mathcal{C}} \mu_j(C') \le 2^{m+1} K \qquad \forall j \ge j_0.\]
We consequently can find a cube $C_{i+1} \subset C_i$ satisfying the properties (i), (ii) and (iii). \\
As a consequence we obtain a decreasing sequence of dyadic closed cubes $\{C_i\}_{i=0}^\infty$ with nonempty intersection, i.e. there exists $x \in \bigcap_{k=0}^\infty C_i$. \\
Since for every $r>0$ there exists $i \in \N$ such that $C_i \subset B_r(x)$, we have 
$$\limsup_{j \to \infty} \mu_j(B_r(x))=+\infty.$$
 This implies that $x$ is in the area blow up set. Finally since $x$ must be in the support of infinitely many $\mu_j$, we have $x \in \overline{B}_1$. This concludes the proof of this lemma.\end{proof}

The next proposition ensures that we have a local bound on the mass ratio, indeed assuming the contrary the varifolds associated with
\[
M_{x,r}=\frac{M-x}{r}
\]
would have unbounded masses. If \(Z\) is the area blow up set for this sequence, we can exploit our \(F\) convexity assumption together with the Hopf lemma to  show that \(Z\) is contained in a wedge, this contradicts the fact that it is an \((m,h)\)-set.

\begin{Proposition}\label{prop:no-boundary blow up}
Let $\Omega \subset \R^{m+1}$ such that $\partial \Omega \cap \overline{B_{2R}}$ is $C^{3}$ and \(\partial\Omega\) is strictly $F$ convex in \(B_{2R}\).  Let $\Gamma$ be a $C^{2,\alpha}$ embedded $(m-1)$-submanifold in $\partial \Omega \cap B_{2R}$  with $\partial \Gamma  \cap B_{2R} = \emptyset$. Furthermore, let $M$ be a $C^2$ stationary (i.e. $\hat{\delta}_F M=0$) manifold in $\Omega$ such that $\partial M \cap B_R = \Gamma$. Then there exists a constant $C$ and a radius $r_0>0$ depending only on $F, \Omega, \Gamma$  such that
\begin{equation}\label{eq:blowup} \sup_{\substack{q \in \Gamma \cap B_{R}\\ r< r_0}} \frac{ \H^m(M\cap B_r(q))}{w_m r^m} \le C < \infty. \end{equation}
\end{Proposition}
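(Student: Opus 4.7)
I argue by contradiction. If \eqref{eq:blowup} fails, there exist $F$-stationary surfaces $M_k$ as in the hypotheses, points $q_k\in\Gamma\cap B_R$ with $q_k\to q_*\in\overline{\Gamma\cap B_R}$, and radii $r_k\searrow 0$ such that $\H^m(M_k\cap B_{r_k}(q_k))/(w_m r_k^m)\to\infty$. Choose coordinates so that $q_*=0$, the inward normal to $\partial\Omega$ at $q_*$ is $e_{m+1}$, and $T_{q_*}\Gamma=L:=\mathrm{span}(e_1)$. Introduce the blow-ups $M_k':=(M_k-q_k)/r_k$, $\Omega_k':=(\Omega-q_k)/r_k$, $\Gamma_k':=(\Gamma-q_k)/r_k$, and the rescaled integrands $F_k(x,\nu):=F(q_k+r_k x,\nu)$; then $F_k\to F_{q_*}$ in $C^2_{\mathrm{loc}}$, and $\Omega_k'\to H:=\{x_{m+1}\ge 0\}$, $\Gamma_k'\to L$ in Hausdorff distance on compact sets. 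Each varifold $V_k:=V_{M_k'}$ is $F_k$-stationary, so by \eqref{eq:boundaryvariation} the variation $\delta_{F_k}V_k$ is concentrated on $\Gamma_k'$ with locally uniformly bounded total variation. Since $\|V_k\|(\overline{B}_1)\to\infty$, Lemma \ref{lem:nonemptyblowup} yields a non-empty area blow-up set $Z\subset\R^{m+1}$; the proof of Theorem \ref{thm:area-blowupset} is local and hinges on a pointwise strict inequality stable under $C^2_{\mathrm{loc}}$-convergence of the integrands, hence $Z$ is an $(m,0)$-set in $\R^{m+1}$ with respect to $F_{q_*}$, and Euclidean convexity of $\Omega$ forces $Z\subset H$.

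The crux is to upgrade $Z\subset H$ to the wedge containment $Z\subset L$, which we carry out in two steps. \emph{Interior step.} If $p\in Z$ has $x_{m+1}(p)>0$, then for large $k$ the ball $B_{x_{m+1}(p)/2}(p)$ lies strictly inside $\Omega_k'$ and is disjoint from $\Gamma_k'$, so $M_k'\cap B_{x_{m+1}(p)/2}(p)$ is $F_k$-stationary and boundary-free. The $F$-stability hypothesis (as used in Theorem \ref{thm:curvature estimates at the boundary}) provides a scale-invariant interior $L^2$-bound on the second fundamental form in dimension $m=2$, and Simon's extended monotonicity formula for $2$-varifolds with $L^2$-curvature (applicable since $F$ is equivalent to the area) yields a uniform density bound, contradicting $p\in Z$. \emph{Boundary step.} If instead $p\in(Z\cap\partial H)\setminus L$ with $d:=\dist(p,L)>0$, then for large $k$ the ball $B_{d/2}(p)$ is disjoint from $\Gamma_k'$; combining the strict $F$-convexity of $\partial\Omega$ at the prelimit level with the Solomon-White $F$-decreasing vector field of Remark \ref{rmk:fdecrasing} (associated to a smooth strictly $F$-convex defining function of $\Omega$) supplies a quantitative Hopf-type barrier confining $M_k'\cap B_{d/2}(p)$ to $\{x_{m+1}\ge\eta\}$ for some $\eta>0$ independent of $k$; the interior step then gives $\|V_k\|(B_{d/4}(p))\le C$ uniformly, again contradicting $p\in Z$. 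Hence $Z\subset L$.

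Since $Z\ne\emptyset$ and $Z\subset L$, which has codimension two in $\R^{m+1}$, pick any $p\in Z$ and test Proposition \ref{prop:equivalence}(ii) with the paraboloid
\[
P(x):=\langle e_{m+1},x-p\rangle+\frac{C}{2}\sum_{j=2}^{m}(x_j-p_j)^2,\qquad C>0.
\]
Since $P|_L\equiv 0$ and $Z\subset L$, the restriction $P|_Z$ attains a local maximum at $p$; moreover $DP(p)=e_{m+1}\ne 0$ and $D^2P(p)=C\sum_{j=2}^m e_j\otimes e_j$. Using $F_{i,m+1}(q_*,e_{m+1})=0$ (from $1$-homogeneity of $F$ in $\nu$) and $F_{jj}(q_*,e_{m+1})>0$ for $j=1,\ldots,m$ (from strict convexity of $F_{q_*}$ on $e_{m+1}^\perp$), one computes
\[
F_{ij}(q_*,e_{m+1})D_{ij}P(p)=C\sum_{j=2}^{m}F_{jj}(q_*,e_{m+1}),
\]
which exceeds any fixed constant once $C$ is chosen large enough, violating condition (ii) at $p$ (the first-order term $(\partial_i F_i)(q_*,e_{m+1})|DP(p)|$ is bounded independently of $C$). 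The hardest step is the boundary part of the wedge containment: propagating the strict $F$-convexity of $\partial\Omega$ through the rescaling degeneration requires a quantitative Hopf-type barrier argument built on the anisotropic Solomon-White maximum principle via Remark \ref{rmk:fdecrasing}.
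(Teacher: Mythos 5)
Your setup (contradiction, blow-up $M'_k$, nonemptiness of $Z$ via Lemma \ref{lem:nonemptyblowup}, $(m,0)$-set property via Theorem \ref{thm:area-blowupset}, frozen integrand $F_{q_*}$) matches the paper's Step~1. But you then deviate: you try to upgrade $Z\subset H$ to the much stronger $Z\subset L=T_{q_*}\Gamma$ (codimension two), whereas the paper only establishes a \emph{wedge} containment
\[
Z\subset\{(x,x_{m+1}):\ x_{m+1}\ge \langle D\Phi(x_0),x\rangle + c_H|x_m|\},
\]
obtained from the Hopf boundary-point lemma applied to barriers $u^\pm_{k}$ solving the nonparametric Euler–Lagrange equation in balls $B^\pm_k$ tangent to the projected curve $\gamma_k$. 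The wedge already suffices for the contradiction, using the explicit function $f(x,x_{m+1})= -x_{m+1}+\langle D\Phi(x_0),x\rangle+\tfrac{c_H}{2T}(x_m^2-\varepsilon|x|^2)$, which is coercive on $Z\cap\{|x_m|\le T\}$ and whose second-order term $\tfrac{c_H}{T}(e_m\otimes e_m-\varepsilon\Id)$ is incompatible with the $(m,0)$-set property. Your paraboloid computation with $D^2P=C\sum_{j\ge 2}e_j\otimes e_j$ is fine \emph{if} one has $Z\subset L$, but it is not needed and would prove more than the wedge containment can deliver.

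The real problem is your route to $Z\subset L$. Your \emph{interior step} invokes $F$-stability and Simon's $2$-dimensional extended monotonicity formula, but Proposition~\ref{prop:no-boundary blow up} is stated for general $m$ and assumes only \emph{stationarity}, not stability; stability first appears in Lemma~\ref{lem:general_area_bound_in_dimension2} and Theorem~\ref{thm:curvature estimates at the boundary}. Even granting stability and $m=2$, the monotonicity route is circular: Lemma~\ref{lem:general_area_bound_in_dimension2} uses precisely the boundary density bound \eqref{eq:assumed_area_bound}—the conclusion of the present proposition—to control $\int_{B_{d/2}}|H|^2$ before Simon's inequality can be applied. Applying it instead to the blow-up $M'_k$ at scale $O(1)$ needs a density bound on a \emph{larger} ball in $M'_k$, which is exactly the unbounded quantity you are contradicting. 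Your \emph{boundary step} (``quantitative Hopf-type barrier ... via Remark~\ref{rmk:fdecrasing}'') is not an argument: the paper's barrier construction requires the solvability and Schauder estimates for the nonparametric operator $L$ on small balls (Assumption~\ref{ass.smallball}(2)), established in Step~2 via the implicit function theorem, together with the Hopf boundary-point lemma at the tangency point $x_k\in\gamma_k$. None of this is replaced by the vector field of Remark~\ref{rmk:fdecrasing}, which produces a first-variation identity, not a strict boundary gradient gap surviving the rescaling. Finally, your proposal does not address the reduction (paper's Step~2) needed to place oneself in the normalized situation where such barriers exist and are uniform in the center point.
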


\begin{proof} We split the proof in two steps:

\emph{Step 1:} Proposition \ref{prop:no-boundary blow up} holds under the following additional Assumption \ref{ass.smallball}:
\begin{Assumption}\label{ass.smallball} There exists $0<\delta<\frac{1}{4}$ such that:
\begin{enumerate}
\item $\Omega \cap B_2 = \{ x_{m+1} \ge \Phi(x_1, \dotsc, x_m) \} \cap B_2$ for some $\Phi \in C^{2,\alpha}(\R^m, \R)$.  Furthermore we have 
$$\Phi(0)=0, \quad D \Phi(0)=0 \quad \text{(i.e. $T_0\partial \Omega = e_{m+1}^\perp$) and } \quad\norm{\Phi}_{C^{2,\alpha}}<\delta;$$
\item Let $\mathcal{F}(x,y,p)$ denote the non-parametric function associated to $F$
\[ \mathcal{F}(x,y,p):= F\left ((x,y),  p_1e_1+\dots+p_me_m - e_{m+1}\right) \]
and $L$ be the Euler-Lagrange operator for $\mathcal{F}$. Then, for every $U \subset B^m_2$ with smooth boundary, $f \in C^{0,\alpha}$ and $g \in C^{2,\alpha}$ with
$$\norm{f}_{C^{0,\alpha}}< \delta, \quad \norm{g}_{C^{2,\alpha}} < \delta,$$
the boundary value problem 
\[ \left\{\begin{aligned}
L u &= f &&\text{ in } U\\
u &= g &&\text{ on } \partial U
\end{aligned}\right.\]
has a unique solution $u \in C^{2,\alpha}(U,\R)$ such that
\[
 \norm{u}_{C^{2,\alpha}(U,\R)} \le C (\norm{f}_{C^{0,\alpha}} + \norm{g}_{C^{2,\alpha}} );
\]
\item for all $x \in \partial \Omega\cap B_2$ we have 
 \[
  0< h_{min} < L(\Phi) < h_{max} < \delta. 
\] 
Note that
\[
F(x,\nu(x)) H_F(x) = \frac{L(\Phi)(x)}{\langle \nu(x), e_{m+1}\rangle} \nu(x)
\]
 for all $x \in \partial \Omega \cap B_2$, where $\nu(x)$ is the normal of $\partial \Omega$ at the point $x$. 

%If $H(x)=H_{F}(x,\partial M)$ denotes the anisotropic mean curvature of $\partial \Omega$ with respect to the inner normal, then
%\[ 0< h_{min} < H(x) < h_{max} < \delta \qquad  \text{ for all} \quad x \in \partial \Omega \cap B_2; \]
\item $\Gamma \subset \partial \Omega$ is $C^{2,\alpha}$. %with $0 \in \Gamma$. 
\end{enumerate}
\end{Assumption}

\emph{Step 2:} There exists a radius $0<R_0\le R$ such that for every $p \in \partial \Omega$ the rescaled domain $\frac{\Omega-p}{R_0}$ and the rescaled manifold $\frac{M-p}{R_0}$ satisfy the conditions of Assumption \ref{ass.smallball}.\\

By a classical covering argument, one can show that Step 1 and Step 2 together imply Proposition \ref{prop:no-boundary blow up}.

\emph{Proof of Step 1:} Assume the conclusion \ref{eq:blowup} does not hold in $B_1$, then there exists a sequence $M_k, r_k, p_k$ satisfying
\begin{enumerate}
\item $\Gamma_k :=\partial M_k \subset \partial \Omega$ with uniformly bounded $C^{2,\alpha}$-norm;
\item $p_k \in \Gamma_k \cap B_1$, $0<r_k< \frac{1}{k}$ and 
\begin{equation}\label{piublowupdicosisimuore}
\frac{\H^m(M_k\cap B_{r_k}(p_k))}{r_k^m} > k. 
\end{equation}
\end{enumerate}
We denote with $\gamma_k$ the projection of $\Gamma_k$ onto the plane $\{ x_{m+1} = 0 \}$, i.e.
$$\Gamma_k = \mathbf{G}_{\Phi}(\gamma_k),$$ 
where $\mathbf{G}_\Phi(x):=(x, \Phi(x))$ is the graph map of $\Phi$. Up to subsequences, and performing if necessary a rotation of $B_2$, we may assume that
\begin{enumerate}\setcounter{enumi}{2}
\item there exists $x_0 \in \overline{B}_1$ such that $p_k=(x_k,\Phi(x_k)) \to p_0=(x_0,\Phi(x_0))$;
\item $\hat{\nu}_k(x_k) \to e_{m}$, where $\hat{\nu}_k(x)$ denotes the normal of $\gamma_k$ in the plane $\{x_{m+1} =0 \}$ at the point $x \in \gamma_k$.
\end{enumerate}
To set up the contradiction we need the following additional construction:\\
Consider  $r_0>0$ small enough so that for all $k \in \N$ we have 
$$r_0 < \min\left\{\frac{1}{\norm{ \mathbf{A}_{\gamma_k} }_{\infty}},\frac12\right\},$$
 where $\mathbf{A}_{\gamma_k}$ denotes the second fundamental form of $\gamma_k$. This is possible since we assumed that the $C^{2,\alpha}$-norm of $\Gamma_k$ is uniformly bounded. \\
For every $k \in \N$ we define the pair of balls
\[ B_k^{\pm} := B^m_{r_0}(x_k \pm r_0\hat{\nu}_k(x_k)) \subset \{x_{m+1} =0 \}. \]
By the choice of $r_0$, we have ensured that $\overline{B_k^\pm} \cap  \gamma_k = \{x_k\}$.
 For each $0\le s \le \delta$, using Assumption \ref{ass.smallball} (2), let $u^\pm_{k,s} \in C^{2,\alpha}(B^\pm_k(x))$ be the unique solution to the boundary value problem 
\[ \left\{\begin{aligned}
L u^\pm_{k,s} &= s &&\text{ in } B_k^\pm \\
u_{k,s}^\pm &= \Phi &&\text{ on } \partial B_k^\pm \, .
\end{aligned}\right.\]
Observe that, by the classical Hopf-maximum principle, if $s> h_{max}$ we have $u^\pm_{k,s} < \Phi$ and if $s< h_{min}$ then $u^\pm_{k,s} > \Phi$. We claim that the graphs of $u^\pm_{k,s}$ never touch $M_k$ in the interior of the cylinders $B^\pm_k \times \R$ for $s \ge \frac12 h_{min}$. Indeed, for $s> h_{max}$ this is obvious since $M_k \subset \Omega$. Suppose there is a first $\frac12 h_{min} < s\le h_{max}$ where for instance the graph $u^+_{k,s}$ touches $M_k$ at a point $q=(y,u^+_{k,s}(y))$. Then $T_qM_k = (- D u^+_{k,s}(y), 1)^\perp$ and $M_k$ is locally the graph over the plane $\{x_{m+1} = 0 \}$ around $y$ by a map $f_k$. Since $M_k$ is stationary we have $L(f_k)=0$, but this contradicts the  strong maximum principle.

For $k \in \N$, define 
\[
u_k^\pm:=u^\pm_{k,\frac12 h_{min}}.
\]
 By the Hopf boundary point lemma we can compare $\Phi$ with $u_k^\pm$ at $x_k$, obtaining the existence of $c_H>0$ depending only on  $F$ and $\partial \Omega$ such that  
\begin{equation}\label{eq:Hopf} \min\left\{ \frac{\partial u_k^+(x_k)}{\partial \hat{\nu}_k(x_k)} - \frac{\partial \Phi(x_k)}{\partial \hat{\nu}_k(x_k)}, -\frac{\partial u_k^-(x_k)}{\partial \hat{\nu}_k(x_k)} + \frac{\partial \Phi(x_k)}{\partial \hat{\nu}_k(x_k)} \right\} > c_H. \end{equation}
Furthermore by (2) in Assumption \ref{ass.smallball}, $\norm{u^\pm_k}_{C^{2,\alpha}}$ is uniformly bounded on $B_k^\pm$.\\
Now we consider the blow-up sequence
\begin{itemize}
\item $M_k':= \frac{M_k - p_k}{r_k}$ in $\Omega'_k:= \frac{\Omega_k - p_k}{r_k}$;
\item $\Gamma_k' = \partial M_k'= \frac{\partial M_k - p_k}{r_k}$ projecting to $\gamma'_k = \frac{\gamma_k - x_k}{r_k}$ in $\{x_{m+1}=0\}$;
\item $d_k^\pm(y)= \frac{u^\pm_k(x_k + r_k y) - \Phi(x_k+r_k y)}{r_k}$ on $\frac{1}{r_k}(B^\pm_k - x_k)$.
\end{itemize}
Observe that, by the regularity assumption on $\Omega$ and $\Gamma_k$ and the estimates on $u_k^\pm$, we have (up to a subsequence)
\begin{itemize}
\item[(i)] $\partial \Omega'_k \to T_{p_0}\partial \Omega$, i.e. $\Omega'_k \to \{ x_{m+1} \ge \langle D\Phi(x_0), x \rangle\}$;
\item[(ii)] $\gamma_k' \to \{ x_m =0 \}$;
\item[(iii)] $d^\pm_k(y) \to a^\pm y_m$ for $y \in \R^m \cap \{ \pm y_m \ge 0 \}$ with $a^+, -a^->c_H$
\end{itemize}
Indeed (ii) follows by property (4). Point (iii) is a consequence of the fact that $\frac{1}{r_k}(B^\pm_k - x_k) \to \R^m \cap \{ \pm y_m \ge 0 \}$ and that, by construction, we have $d^\pm_k=0$ on $\partial \frac{1}{r_k}(B^\pm_k - x_k)$. The last part of (iii) is a consequence of \eqref{eq:Hopf}.

By \eqref{piublowupdicosisimuore} and the definition of $M'_k$, we observe that the sequence of Radon measures $\mu_k:= \mathcal{H}^{m}\res M'_k$,  satisfies the assumptions of Lemma \ref{lem:nonemptyblowup}, hence  $Z\cap \overline{B}_1 \neq \emptyset$ where \(Z\) is the area blow up set for  \(M'_k\)

Since $M$ is a stationary manifold (i.e. $\hat{\delta}_F M=0$), by \eqref{eq:boundaryvariation} we can estimate for every vectorfield $X$ with $\abs{X} \le \mathbf{1}_{B_R}$ 
\[| \delta_F M_k'(X)| \leq \int_{\Gamma'_k} |X|  \le \mathcal{H}^{m-1}(\Gamma'_k\cap B_R). \]
Applying Theorem \ref{thm:area-blowupset}, we get that $Z$ is an $(m,0)$-set in $\R^{m+1}$ for the frozen integrand $F_{p_0}:\nu \mapsto F(p_0, \nu)$. 

Moroever, combining (i) and (iii), we know that 
\begin{equation}\label{eq:subset}
 Z \subset \{ (x,x_{m+1}) \colon x_{m+1} \ge \langle D\Phi(x_0), x \rangle + c_H \abs{x_m} \}. 
 \end{equation}
 
We will show that this contradicts the fact that $Z$ satisfies the characterization  (ii) in Proposition \ref{prop:equivalence} for being an $(m,0)$-set for an appropriate choice of a function $f$. We can assume $c_H \le \frac{1}{4} $ (up to replace $c_H$ with $\min(c_H, \frac14)$). We set 
$$T:=4 \frac{ 1 +\abs{D \Phi(x_0)}}{c_H}$$ and consider $\varepsilon >0$ to be chosen later. We define the function 
\[ f(x,x_{m+1}):= -x_{m+1} + \langle D\Phi(x_0), x \rangle + \frac{c_H}{2T} \left( x^2_m- \varepsilon x^2 \right).\]
On $\{ (x,x_{m+1}) \colon x_{m+1} \ge \langle D\Phi(x_0), x \rangle + c_H \abs{x_m} \}\cap \{ x_m = T \} $ we have 
\begin{align}\label{bordo striscia}
f(x,x_{m+1}) &= -x_{m+1} +\langle D\Phi(x_0), x \rangle + c_H \abs{x_m} + c_H \left( \frac{x_m^2}{2T}  - \abs{x_m} - \frac{\varepsilon x^2}{2T}  \right) \nonumber \\
&\leq  0 + c_H \left( \frac{x_m^2}{2T}  - \abs{x_m}\right) = - c_H\frac{T}{2}  \le -2 (1 + \abs{D \Phi(x_0)}).
\end{align}
But for every $x \in \overline{B}_1$ and choosing $\varepsilon$ sufficiently small, we have 
\[ f(x) > - \frac{3}{2} (1 +\abs{D \Phi(x_0)}). \]
Combining the previous inequality with \eqref{eq:subset} and \eqref{bordo striscia}, we deduce that $f|_Z$ takes a local maximum at some point $p=(\hat{x},\hat{x}_{m+1})$ with $\abs{\hat{x}_m} < T$. Now we claim that this contradicts (ii) in Proposition \ref{prop:equivalence} for sufficient small $\varepsilon>0$.
Indeed we can compute
\begin{align*}
D f(x, x_{m+1}) &=  - e_{m+1} + D \Phi(x_0) + \frac{c_H}{T}( x_m e_m - \varepsilon x ),\\
D^2f(x,x_{m+1}) &= \frac{c_H}{T}( e_m \otimes e_m - \varepsilon \Id_{\R^{(m+1)\times(m+1)}} );
\end{align*}
where $\Id_{\R^{(m+1)\times(m+1)}}$ is the $(m+1)$-dimensional identity matrix. Observe that there exits $\Lambda >0$ such that $\trace(D^2F_{p_0}(\nu))< \Lambda$ for all $\nu \in \mathbb S^{m}$. Furthermore for every $0<\eta <1$ there exists some $\lambda>0$ such that 
\[ D^2F_{p_0}(\nu): e_m \otimes e_m >\lambda \qquad  \text{ for all } \nu \in \mathbb S^m \text{ verifying } \abs{\langle \nu , e_m \rangle }< 1-\eta.\]
For every $x$ such that $\abs{x_m}\le T$, by Assumption \ref{ass.smallball} (1), we can compute
\[ \abs{\langle D f(x, x_m) , e_m\rangle} = \abs{\partial_m\Phi(x_0) + \frac{c_H}{T}(1-\varepsilon) x_m} \leq \abs{\partial_m\Phi(x_0)} + \frac{c_H}{T} \abs{x_m} \le \frac{1}{4}+c_H \leq \frac{1}{2}. \]

Since $\abs{D f(x,x_m)}\ge \langle Df(x,x_m),-e_{m+1}\rangle \ge 1$, we deduce that for every $x$ with $\abs{x_m}\le T$ 
$$\abs{ \left \langle \frac{D f(x)}{\abs{D f(x)}} , e_m \right \rangle } \le \frac12.$$
 If we choose $\varepsilon$ sufficiently small we compute in the local maximum point $p=(\hat{x},\hat{x}_{m+1})$
\[ \left \langle D^2F_{p_0}\left( \frac{D f(p)}{\abs{D f(p)}}\right):D^2 f(p)  \right\rangle \ge \frac{c_H}{T} \left( \lambda - \varepsilon \Lambda \right) >0. \]
This contradicts Proposition \ref{prop:equivalence} (ii). \\

\emph{Proof of Step 2:} The existence of $R_0$ as in the statement of Step 2 is a consequence of the implicit function theorem as in~\cite{White1987}, we report here the argument for the sake of completeness. Fix $q\in \partial \Omega$ and let $\nu_q \in \mathbb S^m$ be the inner normal of $\partial \Omega$ at $q$. Furthermore we fix an orthonormal basis $t_1, \dotsc, t_m$ spanning $T_q \partial \Omega = \nu_q^\perp \sim \R^m$ i.e. $\R^{m+1} = T_q\partial \Omega \times \operatorname{span} \nu_q$. We will write $(x,x_{m+1})$ for points in $T_q\partial \Omega \times \operatorname{span} \nu_q$. 

We consider the family of non-parametric functionals
\[ \mathcal{F}_r(x, u(x), D u(x)):= F\left(q + r (x, u(x)), (-D u(x), 1) \right) . \]
These are the non-parametric functionals associated to the image of the parametrized surfaces $x \mapsto q+ rx + r u(x) \nu_q$. 
Let $L_r$ be the Euler-Lagrange operators for $\mathcal{F}_r$. By strict convexity of $F$, planes are the unique minimizers for the frozen integrand $\nu \in \mathbb S^m \mapsto F(q, \nu )$. With respect to $\mathcal{F}_r$ this implies that the constant functions $u$ are the unique minimizers of $\mathcal{F}_0$, and in particular $L_0u=0$ for  every constant function $u$. The convexity of $F$ translates into the ellipticity of the linearization of $L_r$ around the constant $u_0=0$. Hence the implicit function theorem implies the existence of $\delta_q, R_q>0$  such that, for every couple of scalar functions $f,g$ with $\norm{f}_{C^{0,\alpha}}<\delta$, $\norm{g}_{C^{2,\alpha}} < \delta_q$, $U\subset B_2$ and $r\le R_q$, the boundary value problem 
\[ \left\{\begin{aligned}
L_r u &= f &&\text{ in } U \\
u &= g &&\text{ on } \partial U
\end{aligned}\right.\]
has a unique solution $u \in C^{2,\alpha}(U,\R)$ satisfying
\[ \norm{u}_{C^{2,\alpha}(U,\R)} \le C (\norm{f}_{C^{0,\alpha}} + \norm{g}_{C^{2,\alpha}} ).\]

The size of $R_q, \delta_q$ only depends on the $C^{2,\alpha}$ norm of $F$. Hence by compactness there exist $R_1, \delta_1>0$ such that $ \delta_q >\delta_1$ and $R_q >R_1$ for all $q \in \partial \Omega \cap B_{2R}$. \\
Let $H_F(q)$ as before denote the anisotropic mean curvature of $\partial \Omega$ with respect to the inner normal $\nu(q)$. Fix $0<R_0\le R_1$ such that 
\[ \max_{q \in \overline{B_R}\cap \partial \Omega } h_F(q)< \frac{\delta_1}{R_0}. \]
Now it is straight forward to check that $R_0$ has the desired properties. 

\end{proof}

We now show how to ``globalize'' the above  boundary estimate. We recall that for an \(F\)-stable surface it holds
\begin{equation}\label{eq:allard inequality} 
\int_M \phi^2 \abs{A}^2 \, d \H^2 \le c_1 \int_{M} \abs{D \phi}^2 + c_2 \phi^2 \, d \H^2,
 \end{equation}
for some constants $c_1(n,F)$, $c_2(n,F)>0$ and for all $\phi \in C^1_c(M)$, see \cite[Lemma 2.1]{Allard1983}  or ~\cite[Lamma A.5]{dephilippismaggi2}. 
 
\begin{Lemma}\label{lem:general_area_bound_in_dimension2}
Let $\Omega \subset \R^{3}$ and $\Gamma$ be a $C^{2,\alpha}$ embedded curve in $\partial \Omega \cap B_{2R}$ with $\partial \Gamma  \cap B_{2R} = \emptyset$. Furthermore let $M$ be a two dimensional $F$-stable, $C^2$ regular surface in $\Omega$ such that $\partial M \cap B_R = \Gamma$ and satisfying for some $0<C_0< \infty$ and $r_0 \le 1$
\begin{equation}\label{eq:assumed_area_bound} \sup_{\substack{q \in \Gamma \cap B_{R}\\ r< r_0}} \frac{ \H^2(M\cap B_r(q))}{\pi r^2} \le C_0. \end{equation}
 Then there exists a constant $C>0$ depending only on $F$ such that 
\begin{equation}\label{eq:general_area_bound} \sup_{\substack{B_r(p) \subset B_{R-r_0}\\ r< \frac{r_0}{3}; \dist(p, \Gamma) < \frac{r_0}{3}}} \frac{ \H^2(M\cap B_r(p))}{\pi r^2} \le C C_0. \end{equation}
\end{Lemma}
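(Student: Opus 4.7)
The plan is to combine the boundary density assumption \eqref{eq:assumed_area_bound} with Simon's monotonicity formula for two-dimensional varifolds with $L^2$ mean curvature, using the $F$-stability inequality \eqref{eq:allard inequality} to control the second fundamental form in interior balls. Setting $\epsilon := \dist(p,\Gamma)$ and fixing $q\in\Gamma$ with $|p-q|=\epsilon$, I would split into two regimes depending on the relative size of $r$ and $\epsilon$.

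\emph{Shallow regime ($r\ge \epsilon/2$).} Here $B_r(p)\subset B_{r+\epsilon}(q)$ with $r+\epsilon\le 3r < r_0$, so applying the hypothesis at $q$ gives at once
\[
\H^2(M\cap B_r(p))\le C_0\pi(r+\epsilon)^2\le 9 C_0 \pi r^2.
\]
\emph{Deep regime ($r<\epsilon/2$).} Then $B_\epsilon(p)\cap\Gamma=\emptyset$, and the shallow regime applied at scale $\epsilon$ (admissible since $\epsilon<r_0/3$) already yields $\H^2(M\cap B_\epsilon(p))\le 4 C_0 \pi \epsilon^2$. Choose a cutoff $\phi\in C^1_c(M)$ with $\phi\equiv 1$ on $B_{\epsilon/2}(p)$, $\spt\phi\subset B_\epsilon(p)$ and $|D\phi|\le 2/\epsilon$; since $\spt\phi$ is disjoint from the boundary $\Gamma$, it is a legitimate test function in \eqref{eq:allard inequality}, yielding
\[
\int_{M\cap B_{\epsilon/2}(p)} |A|^2\, d\H^2 \le \left(\frac{4c_1}{\epsilon^2}+c_2\right) \H^2(M\cap B_\epsilon(p)) \le C C_0(1+\epsilon^2)\le C C_0,
\]
where I used $\epsilon\le r_0\le 1$. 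Since in dimension two $|H|^2\le 2|A|^2$, the Euclidean mean curvature satisfies $\int_{M\cap B_{\epsilon/2}(p)} |H|^2\, d\H^2\le C C_0$.

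Because $M\cap B_{\epsilon/2}(p)$ has no boundary inside this ball and has Euclidean mean curvature in $L^2$, I would now invoke Simon's monotonicity formula for two-dimensional varifolds with $L^2$ mean curvature (see e.g.\ \cite{Simon}) to propagate the density bound to all smaller scales: for $0<r<\epsilon/2$,
\[
\frac{\H^2(M\cap B_r(p))}{\pi r^2} \le C\left(\frac{\H^2(M\cap B_{\epsilon/2}(p))}{\pi(\epsilon/2)^2} + \int_{M\cap B_{\epsilon/2}(p)} |H|^2\, d\H^2\right) \le C C_0.
\]
Combining the two regimes yields \eqref{eq:general_area_bound}. The main subtlety is that $M$ is $F$-stationary rather than Euclidean-stationary, but the $F$-stability \eqref{eq:allard inequality} delivers $L^2$ control of the full second fundamental form and hence of the Euclidean mean curvature $H=\trace(A)$, so Simon's standard $L^2$-mean-curvature monotonicity applies to $M$ without modification; no further anisotropic input is required in the monotonicity step.
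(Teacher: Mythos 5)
Your proposal is correct and follows essentially the same strategy as the paper: split into the regime $r \gtrsim \dist(p,\Gamma)$, handled by enclosing $B_r(p)$ in a boundary ball and invoking the hypothesis, and the regime $r \ll \dist(p,\Gamma)$, handled by plugging a cutoff at scale $\dist(p,\Gamma)$ into the stability inequality \eqref{eq:allard inequality} to get an $L^2$ bound on the Euclidean mean curvature and then applying Simon's extended monotonicity formula. The only cosmetic difference is your slightly more careful choice of cutoff support (strictly inside the open ball $B_\epsilon(p)$), which cleanly ensures disjointness from $\Gamma$; otherwise the two arguments coincide.
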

\begin{proof}
This lemma is a direct consequence of \eqref{eq:allard inequality} and of the extended monotonicity formula of L.~Simon (see \cite{Simon1993}).
Indeed, for every $p \in B_{R-r_0} \cap \Omega$ we fix $q \in \Gamma$ with 
\[ d:=\abs{q-p}=\dist(p, \Gamma)< \frac{r_0}{3}.\]
 Hence $q \in B_R\cap \Gamma$. If $\frac{d}{2}< r < \frac{r_0}{3}$, then $B_r(p) \subset B_{3r}(q)$ and we easily estimate
\[  \frac{ \H^2(M\cap B_r(p))}{\pi r^2} \le  \frac{9 \H^2(M\cap B_{3r}(q))}{\pi {(3r)}^2} \overset{\eqref{eq:assumed_area_bound}}{\le} 9 C_0. \]
If $r< \frac{d}{2}$ we argue as follows:   Fix a non-negative even function $\eta \in C^\infty(\R)$ with $\eta(t)=1$ for every $|t|\leq  \frac12$, $\eta(t)=0$ for $|t|\ge 1$ and $|\eta'(t)|\leq 3$ for every $t \in \R$. We choose $\phi(x):=\eta( \frac{\abs{x-p}}{d})$ in \eqref{eq:allard inequality} and, denoting with $H$ the isotropic mean curvature of $M$, we  obtain
\begin{equation}\label{stima utile}
\begin{split}
\frac12 \int_{B_{\frac{d}{2}\cap M}(p)} \abs{H}^2 \, d \H^2 &\le \int_{M
} \phi^2 \abs{A}^2 \\
&\le c_1 \int_{M}  \frac{1}{d^2}\abs{\eta'\Bigl(\frac{\abs{x-p}}{d}\Bigr)}^2 + c_2 \eta\left (\frac{\abs{x-p}}{d}\right)^2 \, d \H^2\\
&\le c_3 \frac{ \H^2(M \cap B_d(p))}{\pi d^2},% \le  c_3 2^2 \frac{ \H^2(M \cap B_{2d}(q))}{\pi (2d)^2} \le c_3 2^2 \,C_0.
\end{split}
\end{equation}
where in the last inequality we used that $d<r_0\leq 1$.

Now we may use the extended monotonicity formula of L. Simon \cite[formula (1.3)]{Simon1993} to conclude that for any $r \le \frac{d}{2}$ we have for some universal constant $c>0$ (independent of $F, M, \Gamma$ and all our particular choices)
\begin{equation}\label{evviva}
 \frac{ \H^2(M\cap B_r(p))}{\pi r^2} \le c \left(  \frac{ \H^2(M\cap B_{\frac{d}{2}}(p))}{\pi d^2} + \int_{B_{\frac{d}{2}}(p)\cap M} \abs{H}^2 \, d \H^2 \right).
\end{equation}
Plugging \eqref{stima utile} in \eqref{evviva}, we conclude the lemma:
\[ \frac{ \H^2(M\cap B_r(p))}{\pi r^2} \le c (1 + 2c_3) \frac{ \H^2(M \cap B_d(p))}{\pi d^2} \le  4c(1+c_3) \frac{ \H^2(M \cap B_{2d}(q))}{\pi (2d)^2} \le C \,C_0,\]
where $C$ depends just on $F$.
\end{proof}

Now finally we can combine the obtained results with the curvature estimate in \cite{White1991} to prove Theorem \ref{thm:curvature estimates at the boundary}

\begin{proof}[Proof of \ref{thm:curvature estimates at the boundary}]
We choose $r_1 = \frac{r_0}{6}$ where $r_0$ is the radius in Proposition \ref{prop:no-boundary blow up}. Hence we may combine Proposition \ref{prop:no-boundary blow up} with Lemma \ref{lem:general_area_bound_in_dimension2} to deduce that for some constant $C$ depending only on $F, \Omega, \Gamma$
\[ \sup_{\substack{B_r(p) \subset B_{\frac{R}{2}}\\ r< 2r_1; \dist(p, \Gamma) < 2r_1}} \frac{ \H^2(M\cap B_r(p))}{\pi r^2} \le C. \]
In particular this implies that for each $q \in B_{\frac{R}{2}} \cap \Gamma$ we have 
\[ \sup_{\substack{B_r(p) \subset B_{2r_1}(q)}} \frac{ \H^2(M\cap B_r(p))}{\pi r^2} \le C .\]
Hence the triple $\Omega, M, B_{2r_1}(q)$ satisfies the assumptions of \cite[Theorem 5.2]{White1991} and we deduce that all principle curvatures of $M \cap B_{r_1}(q)$ are bounded by a constant depending only on $F, \Omega, \Gamma$.
\end{proof}

\bibliographystyle{siam}
\bibliography{areablowupbib}

\end{document}